\documentclass[12pt]{amsart}
\usepackage{amsmath,enumitem}
\usepackage[english,  activeacute]{babel}
\usepackage[latin1]{inputenc}
\usepackage{amssymb}
\usepackage{amsthm}
\usepackage{graphics}
\usepackage{array}
\usepackage{pdflscape}
\usepackage{a4wide}
\setcounter{tocdepth}{3}
\usepackage{color, url}
\usepackage{float}
\usepackage{graphicx}
\usepackage{hyperref}
\usepackage[dvipsnames]{xcolor}
\hypersetup{
	pdftoolbar=true,        % show Acrobat toolbar?
	pdfmenubar=true,        % show Acrobat menu?
	pdffitwindow=false,     % window fit to page when opened
	pdfstartview={FitH},    % fits the width of the page to the window
	pdftitle={},
	pdfauthor={},
	pdfsubject={},
	pdfkeywords={},
	pdfnewwindow=true,      % links in new window
	colorlinks=true,       % false: boxed links; true: colored links
	linkcolor=brown,          % color of internal links
	citecolor=brown,        % color of links to bibliography
	filecolor=brown,      % color of file links
	urlcolor=brown,           % color of external links
	unicode=true          % non-Latin characters in Acrobat's bookmarks
}

\theoremstyle{plain}
\newtheorem{theorem}{Theorem}

\theoremstyle{definition}
\newtheorem{definition}[theorem]{Definition}
\newtheorem{example}[theorem]{Example}

\newcommand{\Z}{{\mathbb Z}}

\def\li{\text{\rm Li}}
\def\lif{\text{\rm Lif}}
\def\sts#1#2{\genfrac{\{}{\}}{0pt}{}{#1}{#2}}
\def\stf#1#2{\genfrac{[}{]}{0pt}{}{#1}{#2}}
\def\eul#1#2{\genfrac{\langle}{\rangle }{0pt}{}{#1}{#2}}
\def\blue#1{\textcolor{blue}{#1}}
\def\red#1{\textcolor{red}{#1}}
\thispagestyle{empty}
\setlength{\parindent}{0pt}

\title[Poly-Cauchy numbers - the combinatorics behind]{Poly-Cauchy numbers - the combinatorics behind}

\author{Be\'ata B\'enyi}
\address{\noindent Faculty of Water Sciences, University of Public Service, Baja, HUNGARY}
\email{benyi.beata@uni-nke.hu}

\author{Jos\'e L. Ram\'{\i}rez}
\address{\noindent Departamento de Matem\'aticas, Universidad Nacional de Colombia, Bogot\'a,  COLOMBIA}
\email{jlramirezr@unal.edu.co}

\date{\today}
\subjclass[2010]{05A05, 05A19}
\keywords{Poly-Cauchy numbers; permutations; poly-Bernoulli numbers.}

\begin{document}
\begin{abstract} We introduce poly-Cauchy permutations that are enumerated by the poly-Cauchy numbers. We provide combinatorial proofs for several identities involving poly-Cauchy numbers and some of their generalizations. The aim of this work is to demonstrate the power and beauty of the elementary combinatorial approach. 
\end{abstract}

\maketitle
\section{Introduction}
Poly-Cauchy numbers were defined by Komatsu \cite{Komc} motivated by the interesting properties of poly-Bernoulli numbers. While since the introduction many combinatorial interpretations and investigations appeared in the literature about poly-Bernoulli numbers, there is no such direction about the poly-Cauchy numbers yet. 

Let $n$ and $k$ be integers with $n\geq 0$ and $k \geq 1$. 
The \emph{poly-Cauchy numbers of the first kind} are defined by
\begin{align*}
c_{n}^{(k)}=n!\underbrace{\int_{0}^1 \cdots \int_{0}^1}_{k}\binom{t_1t_2\cdots t_k}{n}  \,dt_1dt_2\cdots dt_k,
\end{align*}
Moreover, the exponential generating function of $c_{n}^{(k)}$ is given by
\begin{align*}
\lif_k(\ln(1+t))=\sum_{n=0}^{\infty}c_n^{(k)}\frac{t^n}{n!},
\end{align*}
where
$\lif_k(t)=\sum_{n=0}^\infty\frac{t^n}{n!(n+1)^k}$
is the $k$-th \emph{polylogarithm factorial function}.  Similarly, the \emph{poly-Cauchy numbers of the second kind} are defined by
\begin{align*}
\hat{c}_{n}^{(k)}=n!\underbrace{\int_{0}^1 \cdots \int_{0}^1}_{k}\binom{-t_1t_2\cdots t_k}{n} \,dt_1\cdots dt_k,
\end{align*}
and the exponential generating function of $\hat{c}_{n}^{(k)}$ is given by
\begin{align*}
\lif_k(-\ln(1+t))=\sum_{n=0}^{\infty}\hat{c}_n^{(k)}\frac{t^n}{n!}, \quad (k\in \Z).
\end{align*}
Notice that the above generating functions have a meaning as a formal power series even if $k$ is non-positive. Therefore,  the poly Cauchy numbers of both kinds can be defined also for negative $k$.  Poly-Cauchy numbers of the first kind have the explicit formula
\begin{align*}
c_n^{(k)}= (-1)^{n}\sum_{m=0}^{n}\stf{n}{m} \frac{(-1)^m}{(m+1)^k},
\end{align*}
where $\stf{n}{m}$ denotes the unsigned Stirling numbers of the first kind that counts the number of permutations of $[n]:=\{1,2,\ldots, n\}$ into $m$ non-empty cycles, and can algebraically defined for instance by the following identity: 
\begin{align*}
\sum_{m=0}^n \stf{n}{m} x^m = x(x+1)\cdots (x+n-1).
\end{align*}
Poly-Cauchy numbers of the second kind have the explicit formula
\begin{align*}
\hat{c}_n^{(k)}= (-1)^{n}\sum_{m=0}^{n}\stf{n}{m} \frac{1}{(m+1)^k}.
\end{align*}
For more properties about these numbers see for example \cite{Cenk, Komc, Kom5, Kom4}.

In the literature, along the theory of poly-Bernoulli numbers and polynomials, the (analytical) theory of poly-Cauchy numbers were also developed, including several identities and generalizations (polynomials, $q$-parameters) etc. 

We are interested from the combinatorial point of view in poly-Cauchy numbers of the second kind with negative $k$-indices. In this case, these numbers are clearly integers. However, one notice that we have negative and positive integers as well, so actually $(-1)^n\hat{c}_n^{(-k)}$ are positive integers.  We give first a combinatorial interpretation for these numbers. For the sake of simplicity, let us denote these numbers by $\hat{c}_{n,k}$. Table \ref{tab1} shows the first few values of the sequence $\hat{c}_{n,k}$.

\begin{table}[h]
\begin{center}
\begin{tabular}{|c||c|c|c|c|c|c|}
\hline
$n \backslash k$
  & 0 & 1 & 2 & 3 & 4 & 5\\
\hline\hline
0 & 1 & 1& 1& 1 & 1 & 1\\
\hline
1 & 1 & 2 &4 & 8 & 16 & 32\\
\hline
2 & 2 & 5 & 13 & 35 & 97 & 275 \\
\hline
3 &6 & 17 & 51 & 161 & 531 & 1817 \\
\hline
4 & 24& 74 & 244 & 854 & 3148 & 12134\\
\hline
\end{tabular}
\end{center}
\caption{The first values of the sequence $\hat{c}_{n,k}$.}\label{tab1}
\end{table}

\section{Poly-Cauchy permutations}

In this section we introduce a new family of permutations enumerated by the sequence $\hat{c}_{n,k}$. First we recall some well-known facts about permutations. Permutations can be defined and seen from different point of views, as a map, as an arrangement, as a product of cycles, etc. It is useful to switch between these point of views, however it is always important to be conscious of the particular approach that is used in an argument. In our case cycles will play the crucial role. 

Any permutation is the product of cycles, and cycles can be recorded uniquely: we write the largest element of each cycle first and we arrange the cycles in increasing order of their first elements. This representation is called \emph{canonical cycle notation}. 

On the other hand, considering a permutation as a map, $\pi:[n]\rightarrow [n]$, $\pi_i=\pi(i)$ each permutation can be written in one-line notation, 
$\pi = \pi_1\pi_2 \cdots \pi_n$. There is a well-known bijection between permutations of $[n]$ written in cycle notation and written in one-line-notation, which we recall now. We say that $\pi_i$ is a \emph{left-to-right maximum} if, for all $k < i$, we have $\pi_k < \pi_i$. For example, the permutation $3261457$ has three left-to-right maxima. These are the entries
$3, 6$, and $7$.  Let $\pi$ be a permutation of $n$ elements  written in canonical cycle notation, and let $\pi '$ be the permutation written in the one-line notation that is obtained from $\pi$ by omitting all parentheses. Notice that the canonical cycle notation can be obtained from the left-to-right maxima of the permutation. 

\begin{definition}
Let $n$ and $k$ be non negative integers. A \emph{$(n,k)$-poly-Cauchy permutation} is a permutation of $[n+k]$ with the following properties:
\begin{itemize}
\item[(A1)] each maximal contiguous substring whose support belongs to $\{n+1,n+2,\ldots,n+k\}$ is increasing, 
\item[(A2)] the left-to-right maxima of the maximal contiguous substrings whose support belongs to  $\{1,2,\ldots, n\}$ are increasing from left to right, when ``reading'' the whole permutation. 
\end{itemize}
\end{definition}
\begin{example}\label{example_first}
A $(9,6)$-poly-Cauchy permutation is \begin{align*}
\pi= 11-13-6-1-2-15-7-4-8-10-12-14-9-5-3.
 \end{align*}
\end{example}

We introduce now some terminology and notation. Let $\mathcal{P}_{n,k}$ denote the set of all $(n,k)$-poly-Cauchy permutations. For simplicity of notation we use the notation of red elements for $\{\red{1}, \red{2},\ldots, \red{n}\}$ and the notation of  blue elements for $\{\blue{1},\blue{2},\ldots, \blue{k}\}$ instead of the set $\{n+1,n+2,\ldots,n+k\}$ (which is essentially the same). Given an $(n,k)$-poly-Cauchy permutation $\pi\in \mathcal{P}_{n,k}$ we will denote the arrangement of the elements $\{\red{1},\ldots, \red{n}\}$, seeing itself as a permutation of $[n]$ as $\red{\pi^n}$. Similarly, the permutation/or ordered partition on the elements $\{\blue{1},\ldots, \blue{k}\}$ as $\blue{\pi^k}$.

\begin{example}
The permutation given in Example \ref{example_first} is in this notation $\blue{24}\red{612}\blue{6}\red{748}\blue{135}\red{953}$.
Maximal contiguous is the whole sequence of red elements and blue elements, respectively. In the first permutation the blue sequences are $\blue{24}$, $\blue{6}$, $\blue{135}$, and the red sequences: $\red{\textbf{6}12}$, $\red{\textbf{7}4\textbf{8}}$, $\red{\textbf{9}53}$. In the blue sequences are the elements increasingly ordered and the bold elements in the red sequences denote the left-to-right maxima. $\red{\pi^n}=\red{612748953}$ and $\blue{\pi^k}=\blue{246135}$. 
\end{example}

Note that the merging $\red{\pi^n}$ and $\blue{\pi^k}$ happens so that a blue sequence is always inserted before a left-to-right maxima of $\red{\pi^n}$. On the other hand, a red sequence is always inserted after a descent top of $\blue{\pi^k}$. (A \emph{descent top} of a permutation $\sigma$ is $\sigma_i$, if $\sigma_{i}>\sigma_{i+1}$, i.e., an element followed by a smaller element.)

As a visualization of a poly-Cauchy permutation we can use permutation matrices. Recall that for a permutation $\pi=\pi_1\pi_2\cdots \pi_n$, its permutation matrix is obtained  by placing a dot in columns $i$ and row  $\pi_i$ from below. The poly-Cauchy permutations can be represented by a permutation  matrix as in Figure \ref{mat}.  

\begin{figure}[ht]
\begin{center}
 \includegraphics[scale=0.7]{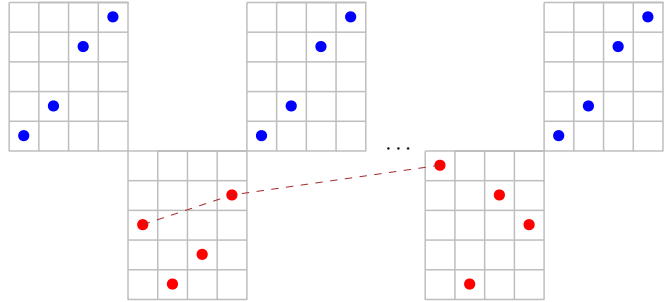}
 \caption{Decomposition of a poly-Cauchy permutation.}\label{mat}
\end{center}
\end{figure}

\begin{example}
For $n=2$ and $k=1$ we have the $5$ permutations:
\begin{align*}
\blue{1}\red{21}, \quad \blue{1}\red{12}, \quad \red{21}\blue{1}, \quad \red{12}\blue{1},\quad  \red{1}\blue{1}\red{2}. 
\end{align*}
For $n=2$ and $k=2$ we have the $13$ permutations:
\begin{align*}
&\blue{12}\red{12},\quad \blue{12}\red{21}, \quad \blue{1}\red{12}\blue{2},\quad  \blue{1}\red{21}\blue{2},\quad \blue{2}\red{12}\blue{1},\quad  \blue{2}\red{21}\blue{1},\quad \blue{1}\red{1}\blue{2}\red{2},\quad \blue{2}\red{1}\blue{1}\red{2} \\
&\red{12}\blue{12},\quad \red{21}\blue{12},\quad \red{1}\blue{12}\red{2},\quad \red{1}\blue{1}\red{2}\blue{2},\quad \red{1}\blue{2}\red{2}\blue{1}.
\end{align*}
\end{example}

\begin{theorem}
The $(n,k)$-poly-Cauchy permutations are enumerated by the sequence $\hat{c}_{n,k}$.
\end{theorem}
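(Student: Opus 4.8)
The plan is to reduce the statement to the explicit formula recalled in the Introduction. Substituting $-k$ for $k$ in the formula $\hat c_n^{(k)}=(-1)^n\sum_{m=0}^n\stf{n}{m}\frac{1}{(m+1)^k}$ and multiplying by $(-1)^n$ gives
\begin{align*}
\hat c_{n,k}=(-1)^n\hat c_n^{(-k)}=\sum_{m=0}^{n}\stf{n}{m}(m+1)^k,
\end{align*}
so it suffices to exhibit a bijection showing that $|\mathcal P_{n,k}|$ equals this sum. The right-hand side is tailor-made for a combinatorial reading: $\stf{n}{m}$ counts permutations of $[n]$ with exactly $m$ cycles, and $(m+1)^k$ counts the functions from a $k$-element set into an $(m+1)$-element set. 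I would therefore set up a bijection between $\mathcal P_{n,k}$ and pairs $(\sigma,f)$, where $\sigma$ is a permutation of $[n]$ and $f$ assigns each of the $k$ blue elements to one of $m+1$ admissible positions, $m$ being the number of cycles of $\sigma$.

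First I would send $\pi\in\mathcal P_{n,k}$ to its red subword $\red{\pi^n}$, read as a permutation $\sigma$ of $[n]$ in one-line notation. Using the fundamental bijection recalled above (canonical cycle notation with the parentheses deleted), the cycle leaders of $\sigma$ become exactly its left-to-right maxima, so the number of permutations whose red part has precisely $m$ left-to-right maxima is $\stf{n}{m}$. The key geometric observation is that in any poly-Cauchy permutation the blue material is distributed among the gaps of $\red{\pi^n}$ located immediately before each left-to-right maximum, together with the single gap after the last red entry, i.e.\ $m+1$ gaps in total. By (A1) the blue elements landing in a fixed gap are forced to appear in increasing order, so their placement carries no more information than the function $f\colon\{\blue 1,\dots,\blue k\}\to\{0,1,\dots,m\}$ recording which gap receives each blue element. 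Conversely, from $(\sigma,f)$ one rebuilds $\pi$ uniquely by writing $\sigma$ and inserting into each gap the increasingly sorted set of blue elements assigned to it; this reconstruction is inverse to the map just described. Summing over the number of left-to-right maxima then yields $|\mathcal P_{n,k}|=\sum_m\stf{n}{m}(m+1)^k$, as required.

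The step that needs the most care, and where I expect the real content to sit, is proving that the admissible gaps are exactly the $m+1$ described above, i.e.\ that condition (A2) is equivalent to requiring every blue sequence to be inserted immediately before a left-to-right maximum of $\red{\pi^n}$ (or at the very end). Concretely, I would show that cutting $\red{\pi^n}$ before a red entry $v$ that is not a left-to-right maximum breaks the monotonicity demanded by (A2): some earlier red block then contains an element $w>v$, hence its block-maximum exceeds $v$, so the concatenated sequence of block left-to-right maxima would have a larger value preceding the smaller value $v$; and conversely that cutting only before the left-to-right maxima leaves the concatenated block-maxima equal to the (increasing) sequence of global left-to-right maxima, so (A2) holds. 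Once this equivalence is established, well-definedness and invertibility of the bijection are routine, and the example $\pi=\blue{24}\red{612}\blue{6}\red{748}\blue{135}\red{953}$ serves as an illustration, with $\red{\pi^n}=\red{612748953}$ (left-to-right maxima $6,7,8,9$) and $f$ sending the blue elements into the gaps before $6$, $7$ and $9$.
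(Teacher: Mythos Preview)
Your proposal is correct and follows essentially the same route as the paper: both arguments identify the red subword with a permutation of $[n]$ (via the canonical cycle notation / fundamental bijection), observe that this contributes $\stf{n}{m}$ when there are $m$ cycles (equivalently $m$ left-to-right maxima), and then count $(m+1)^k$ ways to drop the blue elements into the $m+1$ available slots between cycles, the increasing order within each slot being forced by (A1). The only difference is one of emphasis: the paper states the construction and asserts that it ensures (A1) and (A2), whereas you spell out carefully why (A2) is equivalent to the restriction that blue blocks may sit only immediately before a global left-to-right maximum of $\red{\pi^n}$ or after the last red entry --- a verification the paper leaves implicit.
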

\begin{proof}
The  $(n,k)$-poly-Cauchy permutations  can be constructed by the following procedure: 
\begin{enumerate}
\item First, permute the elements $\{\red{1},\red{2},\ldots, \red{n}\}$, and write it into canonical cycle notation. This step will ensure the condition (A2).
\item Secondly,  insert the elements of $\{\blue{1},\ldots, \blue{k}\}$ in between the cycles (before or after). Notice that we have $m+1$ spaces and some of them may be left empty. The elements in between the cycles are written in increasing order. This step will ensure the condition (A1).
\end{enumerate}
This way we obtain an alternating sequence of cycles of $\{\red{1},\ldots, \red{n}\}$ and blocks of $\{\blue{1},\ldots, \blue{k}\}$.
% We make this sequence unique and corresponding to (A1) and (A2) by determining the way how to record the elements to obtain one permutation of $[n+k]$.
It is clear that the number of $(n,k)$-poly-Cauchy permutations constructed by this procedure is given by
\begin{align*}
\hat{c}_{n,k}= \sum_{m=0}^{n}\stf{n}{m}(m+1)^k.
\end{align*}
In Figure \ref{ex1} we give an example of this procedure for a $(9,6)$-poly-Cauchy permutation. 
\begin{figure}[H]
\begin{center}
 \includegraphics[scale=0.6]{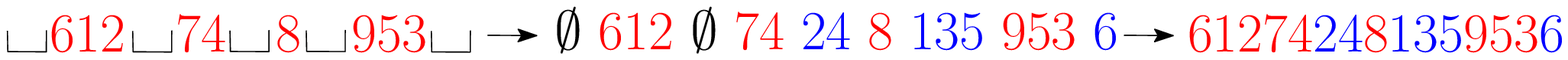}
 \caption{Procedure to construct a poly-Cauchy permutation.}\label{ex1}
\end{center}
\end{figure}
\end{proof}

We give some examples for some particular values of $n$ and $k$. 
\begin{example}
For $k=0$ we have $\hat{c}_{n,0}=n!$, since $(n,0)$-poly-Cauchy permutation is simple a permutation of $[n]$. 
\end{example}
\begin{example}
For $n=1$ we have $\hat{c}_{1,k}=2^k$. To obtain a $(1,k)$-poly-Cauchy permutation we  insert $\red{1}$ in between two sets of the blue elements (before or after),  $\{\blue{1},\blue{2},\ldots, \blue{k}\}$. So, $|\mathcal{P}_{1,k}|$ is the number of choosing $i$ elements from $\{\blue{1},\blue{2},\ldots, \blue{k}\}$ that are to the left of $\red{1}$, allowing $i$ to be $0$ and $k$. 
\end{example}
\begin{example}
For $n=2$ we have $\hat{c}_{2,k}=2^k + 3^k$. In this case, we have the sets $\{\red{1}, \red{2}\}$ and $\{\blue{1},\blue{2},\ldots, \blue{k}\}$, so the number of ways we get such a permutation is to insert the string $\red{21}$ in between two sets of the blue elements, before or after, so we have $2^k$ ways. We can also insert the strings  $\red{1}$  and $\red{2}$ in between two sets (possibly empty), so we have $3^k$ ways. 
\end{example}
In the following theorems we will give combinatorial arguments of some identities involving  poly-Cauchy numbers, poly-Bernoulli numbers, Stirling numbers of both kinds, and Eulerian numbers. 

\begin{theorem}\label{theo:orthogonal}
For non-negative $n$ and $k$ it holds
\begin{align*}
	\sum_{m=0}^{n}(-1)^m\sts{n}{m}\hat{c}_{m,k} = (-1)^n(n+1)^k.
\end{align*}
\end{theorem}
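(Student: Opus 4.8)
The plan is to reduce the identity to the classical orthogonality of the two kinds of Stirling numbers, using the closed formula for the poly-Cauchy numbers established in the enumeration theorem. Recall that $\hat{c}_{m,k}=\sum_{j=0}^{m}\stf{m}{j}(j+1)^k$. Substituting this into the left-hand side and interchanging the (finite) order of summation, I would write the sum as $\sum_{j=0}^{n}(j+1)^k\big(\sum_{m=j}^{n}(-1)^{m}\sts{n}{m}\stf{m}{j}\big)$, so that the whole problem collapses to understanding the inner bracket for each fixed $j$.

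The inner bracket is, up to a sign, exactly a Stirling-inversion coefficient. Writing the signed Stirling numbers of the first kind as $s(m,j)=(-1)^{m-j}\stf{m}{j}$, the matrices $(\sts{n}{m})$ and $(s(m,j))$ are mutually inverse, since they record the two transition relations between the monomials $x^m$ and the falling factorials $x(x-1)\cdots(x-m+1)$, one of which is the signed form of the rising-factorial identity $\sum_m\stf{n}{m}x^m=x(x+1)\cdots(x+n-1)$ quoted in the introduction. Hence $\sum_{m=j}^{n}(-1)^{m-j}\sts{n}{m}\stf{m}{j}=\delta_{n,j}$, and pulling out the factor $(-1)^{j}$ gives $\sum_{m=j}^{n}(-1)^{m}\sts{n}{m}\stf{m}{j}=(-1)^{j}\delta_{n,j}$. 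Only the term $j=n$ survives, and it contributes $(-1)^{n}(n+1)^{k}$, which is exactly the claimed right-hand side.

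Since the paper's goal is to exhibit bijective reasoning, I would in fact prefer to prove the inner orthogonality combinatorially rather than quote it. For a fixed number of cycles $j$, interpret $\sts{n}{m}\stf{m}{j}$ as the number of ways to partition $[n]$ into $m$ blocks and then arrange those blocks into $j$ cycles, weighted by $(-1)^{m}$; because the number of cycles $j$ (and hence the number $j+1$ of gaps receiving the blue elements) is preserved, any such involution automatically extends to the full poly-Cauchy objects by leaving the blue assignment untouched. The task is then to build a sign-reversing, fixed-point-free involution on these block-in-cycle structures whenever $j<n$ (toggling $m$ by one, for instance by splitting off or absorbing the element $\red{1}$ relative to its block), the unique uncancelled configuration being $j=n$, where every block is a singleton and the cycle arrangement is forced. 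The main obstacle is precisely the careful construction of this involution: one must choose a canonical element or block on which to act so that splitting and merging are genuine mutual inverses, are always defined off the diagonal $j=n$, and never create an empty block or disturb the cycle count. Once that involution is in hand, everything except the $j=n$ term cancels and the identity follows.
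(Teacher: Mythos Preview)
Your algebraic argument is correct: substituting $\hat{c}_{m,k}=\sum_{j}\stf{m}{j}(j+1)^k$ and invoking the orthogonality $\sum_{m}(-1)^{m-j}\sts{n}{m}\stf{m}{j}=\delta_{n,j}$ collapses everything to the $j=n$ term. This is a cleaner and more direct route than the paper's, which proceeds purely bijectively without ever naming the orthogonality relation. What you lose is reusability: the paper's later proofs (the $r$-Stirling version, the poly-Bernoulli connection, the polynomial analogue) all work by tweaking the same involution, so the combinatorial proof pays dividends downstream in a way the algebraic shortcut does not.

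Your combinatorial outline matches the paper's setup exactly---partition $[n]$ into $m$ blocks, arrange the blocks into $j$ cycles, leave the blue placement untouched---but your tentative pivot element $\red{1}$ runs into the very obstacle you flag: if $\red{1}$ sits alone in a singleton cycle there is no neighbouring block in that cycle to absorb it, yet $j<n$ may still hold. The paper sidesteps this by taking $a$ to be the \emph{greatest} element that is not alone in its cycle; such an $a$ exists whenever $j<n$, and the involution then merges the singleton block $\{a\}$ with the block following it in the cycle, or splits $a$ off from its block if it is not a singleton. That choice guarantees the map is defined off the diagonal, is its own inverse, and changes $m$ by exactly one.
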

\begin{proof}
The right hand side is (besides the $(-1)$ factor) a poly-Cauchy permutation where the elements $\{\red{1},  \ldots, \red{n}\}$ are in increasing order (with some blocks of the elements $\{\blue{1}, \ldots, \blue{k}\}$ in between, before or after them), $\red{\pi^n}$ is the identity permutation. We call such a permutation \emph{id-poly-Cauchy permutation}.
	
We define an involution, $\phi$, to show that the left hand side counts id-poly-Cauchy permutations of size $(n,k)$. The involution $\phi$ is defined on the set of poly-Cauchy permutations with an extra structure on the red elements. Partition the elements $\{\red{1}, \red{2},  \ldots, \red{n}\}$  into $m$ non-empty blocks (write the elements of a block into decreasing order, and consider the greatest element of a block as its ``representative''). Then permute these blocks into a permutation with $j$ cycles, $j\leq m \leq n$. Insert between the cycles of blocks the blue elements, $\{\blue{1}, \blue{2} \ldots, \blue{k}\}$. in order to obtain a $(n,k)$-poly-Cauchy permutation with an extra structure on the substrings of red elements (the blocks). Note $j$ is the number of left-to-right maxima among the elements $\{\red{1}, \red{2},  \ldots, \red{n}\}$.  

For example, one set partition of $\{\red{1}, \red{2},  \ldots, \red{9}\}$ into $m=5$ blocks is $B_1=\{\red{4}, \red{2}, \red{1}\}, B_2=\{\red{5}, \red{3}\}, B_3=\{\red{6}\}, B_4=\{\red{8}, \red{7}\}, B_5=\{\red{9}\}$. Now, we permute these five blocks  to obtain  a permutation in $j=3$ cycles. For example, the permutation $(32)(4)(51)$ corresponds to the $(B_3,B_2)(B_4)(B_5,B_1)$. Finally, we insert the blue blocks, for example we can insert the blocks $ \{\blue{1}, \blue{3}\},  \{\blue{2}, \blue{4}, \blue{5}\}, \{\blue{6}\},$ and we obtain the $(9,6)$-poly-Cauchy permutation $\blue{13}\red{653}\red{87}\blue{245}\red{9421}\blue{6}$.

If the number of cycles is $n$, that is $j=n$, that can happen only one way, every block contains only one element and each cycle one block, hence, after inserting the blocks of $\{\blue{1}, \blue{2} \ldots, \blue{k}\}$,  we obtain an id-poly-Cauchy permutations.
	
Otherwise, i.e., if the number of cycle is less than $n$, then there is at least one cycle that contains at least two elements. Let $a$ be the greatest element which is not alone in a cycle. 

If $a$ is in a block alone, merge it with the block that follows it in the cycle. If $a$ is not alone separate it into a block to be alone.  
	
This way we defined an involution, $\phi$,  between the set of poly-Cauchy permutations with extra structure with even number of left-to-right maxima in $\red{\pi^n}$ and odd number of left-to-right maxima in $\red{\pi^n}$.
 
\end{proof}
Many formulas given in the literature are consequences of Theorem \ref{theo:orthogonal} and modified forms of the involution $\phi$ can be used to provide combinatorial proofs for them.  For instance, Komatsu presents a formula in \cite{Kom6} (Theorem 3) that can be seen as a generalization of Theorem \ref{theo:orthogonal}. Let $\sts{n}{k}_r$ denote the $r$-Stirling numbers of the second kind that count for instance the number of partitions of $[n]$ into $k$ non-empty blocks such that $\{1,2,\ldots,r\}$ are in distinct blocks. These numbers were introduced by Broder \cite{Broder}.
\begin{theorem}[\cite{Kom6}, Theorem 3] 
For non-negative $n$, $k$ and $r$, with $r\leq n$, it holds
\begin{align}\label{form:r-orth}
\sum_{j=r}^{n}(-1)^{j}\sts{n}{j}_r\hat{c}_{j,k}  = \sum_{\ell =1}^r \stf{r}{\ell} (n-r+\ell +1)^k.
\end{align}
\end{theorem}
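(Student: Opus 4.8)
The plan is to prove this by a sign-reversing involution that directly generalizes the map $\phi$ from Theorem \ref{theo:orthogonal}, now carrying the extra bookkeeping forced by the $r$ distinguished elements $\{\red 1,\ldots,\red r\}$. First I would read the left-hand side as a signed enumeration of decorated configurations: for each $j$ with $r\le j\le n$ the factor $\sts{n}{j}_r$ chooses a partition of $\{\red 1,\ldots,\red n\}$ into $j$ blocks with $\red 1,\ldots,\red r$ in distinct blocks, and the factor $\hat{c}_{j,k}=\sum_{i}\stf{j}{i}(i+1)^k$ then arranges these $j$ blocks into cycles (canonical cycle notation, each block represented by its largest element) and inserts increasing runs of blue elements into the $i+1$ gaps. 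Each such configuration is weighted by $(-1)^j$, i.e. by $(-1)$ to the number of blocks, exactly as in the proof of Theorem \ref{theo:orthogonal}.

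Next I would define the involution $\phi_r$ verbatim as $\phi$, but arranged so that it never disturbs the distinguished elements. The crucial observation is that $\red 1,\ldots,\red r$ are the \emph{smallest} elements, so the greatest element that is ``not alone in its cycle'' automatically lies in $\{\red{r+1},\ldots,\red n\}$ unless every multi-element cycle consists solely of distinguished elements. Thus, off the fixed points, let $a$ be the greatest element that is not alone in its cycle (necessarily with $a>r$); if its block is the singleton $\{a\}$, merge it with the block following it in the cycle, otherwise split $a$ off into a block of its own. Since $a$ is the cycle maximum, the following block has representative smaller than $a$, which keeps $\phi_r$ a well-defined, block-count-changing, self-inverse map; one also checks that $a$ remains the selected element after the move, since the cycle structure and the set of ``alone'' elements are unchanged. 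The step I expect to be the main obstacle is verifying that $\phi_r$ never violates the $r$-condition: the merge must never unite two distinguished elements. This holds because $a$ is non-distinguished and every block already contains at most one element of $\{\red 1,\ldots,\red r\}$, so $\{a\}\cup B$ still contains at most one, while the split step trivially preserves distinctness.

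Finally I would identify the fixed points as the configurations in which no non-distinguished element is ``not alone in its cycle.'' These force every element of $\{\red{r+1},\ldots,\red n\}$ into its own singleton cycle and every multi-element cycle to consist exclusively of distinguished elements, each in its own singleton block; equivalently, $\{\red 1,\ldots,\red r\}$ forms an arbitrary permutation written in $\ell$ cycles ($\stf{r}{\ell}$ choices), the remaining $n-r$ elements contribute $n-r$ singleton cycles, and the $k$ blue elements are distributed freely among the resulting $(n-r+\ell)+1$ gaps ($(n-r+\ell+1)^k$ choices). Every fixed point has exactly $n$ singleton blocks, hence weight $(-1)^n$, and since $\phi_r$ cancels all other terms in sign-reversing pairs, the left-hand side collapses to $(-1)^n\sum_{\ell=1}^{r}\stf{r}{\ell}(n-r+\ell+1)^k$. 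This is the asserted identity, the global factor $(-1)^n$ being exactly the sign relating $\hat{c}_{j,k}$ to $\hat{c}_{j}^{(-k)}$; I would track this factor carefully to match the stated normalization.
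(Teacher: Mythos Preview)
Your proposal is correct and follows essentially the same approach as the paper: build a decorated configuration (an $r$-partition of the red elements, arrange the blocks into cycles, insert blue elements), then apply the merge/split involution based on a carefully chosen pivot $a$, and identify the fixed points as those where all non-special elements sit in singleton cycles while the special elements form an arbitrary permutation in $\ell$ cycles. The only cosmetic difference is that the paper defines $a$ directly as ``the greatest \emph{non-special} element not alone in a cycle,'' whereas you take $a$ to be the greatest element not alone and then observe that, off the fixed-point set, this element is automatically non-special since the special elements are the $r$ smallest; the two definitions coincide, and the remainder of the argument (preservation of the $r$-condition under merge, description and enumeration of fixed points) is identical.
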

\begin{proof}
The idea of the proof is the same as in the proof of Theorem \ref{theo:orthogonal}. Partition the elements $\{\red{1},\ldots, \red{n}\}$ into $j$ non-empty blocks such that the elements $\{\red{1},\ldots, \red{r}\}$ are in distinct blocks. As usual, we call these elements \emph{special elements} and the remaining elements, $\{\red{r+1},\ldots, \red{n}\}$, as \emph{non-special elements}. Arrange the blocks into $i$ cycles ($i\leq j\leq n$). Merge the so obtained permutation with blocks of $\{\blue{1}, \blue{2} \ldots, \blue{k}\}$ in order to obtain a $(n,k)$-poly-Cauchy permutation with an extra structure on the substrings of red elements (the blocks). As before, if the number of cycles is $n$, every block contains only one element, and each cycle one block, hence after merging with the blocks  of $\{\blue{1}, \blue{2} \ldots, \blue{k}\}$  we obtain an id-poly-Cauchy permutations. 

If $j\not=n$, there is at least one cycle that contains at least two elements. Note that the involution $\phi$, we defined in the proof of Theorem \ref{theo:orthogonal} can not be used in that form, since we can not merge blocks containing the special elements. We will modify slightly the definition of $a$, the base element of our involution, $\phi '$. Let $a$ be the greatest \textit{non-special} element which is not alone in a cycle. 

If such an $a$ exists, we can apply the involution $\phi$ based on this element, i.e., if $a$ is in a block alone, merge it with the block that follows it in the cycle. If $a$ is not alone separate it into a block to be alone. 

However, such an $a$ does not have to exist. If such an $a$ does not exist, it means, all the non-special elements are in a distinct cycle, (and so distinct blocks), while the special elements are in distinct blocks by definition but not necessarily in distinct cycles. $\red{\pi^n}$ is a permutation of the form: $\red{\pi^n}=(c_1)(c_2)\cdots (c_{\ell})(r+1)(r+2)\cdots(n)$, where $\ell$ is the number of cycles into the special elements are ordered. Note that these cycles contain only special elements (otherwise $a$ with the required property would exist). The blue elements are inserted in the poly-Cauchy permutation in between these cycles of $\red{\pi^n}$ (before or after).

The involution $\phi '$ shows that the left hand side of the identity \eqref{form:r-orth} counts such poly-Cauchy permutations.

On the other hand, such poly-Cauchy permutations can be constructed by the following procedure. First arrange the $r$ special elements into $\ell$ cycles, and take the non-special elements as fix points. Record this permutation in canonical cycle notation and insert the blue elements $[k]$ in between, before or after the cycles decoding this insertion by a word $w=w_1w_2\cdots w_k$, where $w_i$ is the number of cycles to the left of the element $i$. The number of ways of doing this is clearly given by the formula of the right hand side of the Identity (\ref{form:r-orth}).
\end{proof}

Theorem \ref{theo:orthogonal} implies also the formula that connects poly-Cauchy numbers and poly-Bernoulli numbers \cite{Kom7}.  The \emph{poly--Bernoulli numbers},  denoted by $B_{n}^{(k)}$, where $n$ is a positive integer and $k$ is an
integer, are defined by the following exponential generating function \cite{Kaneko}
\begin{equation}\label{gefunpoly}
\sum_{n=0}^{\infty}B_n^{(k)}\frac{x^n}{n!} =\frac{\li_k(1-e^{-x})}{1-e^{-x}},
\end{equation} where $\li_k(z) =\sum_{i=1}^{\infty}\frac{z^i}{i^k}$ is the $k$-th polylogarithm function. Note that for $k=1$ we recover the classical Bernoulli numbers, that is,  $B_n^{(1)}=(-1)^nB_n$, for $n\geq 0$, where $B_n$ denotes the $n$-th Bernoulli number. Poly-Bernoulli numbers were introduced by Kaneko in 1997 \cite{Kaneko} as he noticed that the generating function of the usual Bernoulli numbers  can be generalized by using the polylogarithm function. From the combinatorial point of view the array with negative $k$ indices are interesting, since these numbers are integers (see sequence  A099594 in \cite{OEIS}).  Table \ref{tab2} shows the first few values of the sequence $B_n^{(k)}$.
\begin{table}[h]
\begin{center}
\begin{tabular}{|c||c|c|c|c|c|c|}\hline
$k \backslash n$ & 0 & 1 & 2 & 3& 4& 5\\ \hline\hline
0 & 1 & 1 & 1 & 1 & 1 & 1 \\ \hline
-1 & 1 & 2 & 4 & 8 & 16 & 32\\\hline
-2 & 1 & 4 & 14 & 46 & 146 & 454\\\hline
-3 & 1 & 8 & 46 & 230 & 1066 & 4718\\\hline
-4 & 1 & 16 & 146 & 1066 & 6902 & 41506\\\hline
-5 & 1 & 32 & 454 & 4718 & 41506 & 329462\\ \hline
\end{tabular}
\end{center}
\caption{The first values of the sequence $B_n^{(k)}$.}\label{tab2}
\end{table} 

The poly-Bernoulli numbers count several combinatorial objects (see for example \cite{Benyi, BH, Nagy, BR, Brewbaker}),   in particular count Callan permutations \cite{BH1}.  An \emph{$(n,k)$--Callan permutation} is a permutation of $[n+k]$ such that each maximal contiguous substring whose support belongs to $\{1,2,\ldots, n\}$ (respectively $\{n+1,n+2,\ldots, n+k\}$) is increasing (respectively decreasing). It is easy to see that an $(n,k)$-Callan permutation can be seen as an alternating sequence of blocks of partitions of $\{\red{1},\ldots, \red{n}\}$ and $\{\blue{1},\blue{2},\ldots, \blue{k}\}$. 
		
\begin{theorem}[\cite{Kom7}, Theorem 3.2] 
For non-negative $n$ and $k$ it holds
\begin{align*}
	B_n^{(-k)} = (-1)^n \sum_{\ell = 1}^{n}\sum_{m=1}^{n} m! \sts{n}{m} \sts{m}{\ell} \hat{c}_{\ell}^{(-k)}.	
\end{align*}
\end{theorem}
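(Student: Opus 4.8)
The plan is to peel the identity in two stages: first collapse the inner summation over $\ell$ by invoking the orthogonality of Theorem \ref{theo:orthogonal}, and then recognize the surviving single sum as the classical closed form for the poly-Bernoulli numbers, which I would prove combinatorially through the Callan-permutation model recalled just before the statement. To set up the first stage I rewrite everything in terms of the positive quantities $\hat{c}_{\ell,k}$. Since by definition $\hat{c}_{\ell,k}=(-1)^\ell \hat{c}_\ell^{(-k)}$, equivalently $\hat{c}_\ell^{(-k)}=(-1)^\ell\hat{c}_{\ell,k}$, and since $\sts{m}{\ell}=0$ for $\ell>m$, the inner sum is
\[
\sum_{\ell=1}^{n}\sts{m}{\ell}\hat{c}_\ell^{(-k)}=\sum_{\ell=1}^{m}(-1)^\ell\sts{m}{\ell}\hat{c}_{\ell,k}.
\]
Applying Theorem \ref{theo:orthogonal} with $n$ replaced by $m$ (the $\ell=0$ term drops out because $\sts{m}{0}=0$ for $m\ge1$) evaluates this to $(-1)^m(m+1)^k$. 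Substituting back, the right-hand side of the statement becomes
\[
(-1)^n\sum_{m=1}^{n}m!\sts{n}{m}\sum_{\ell=1}^{n}\sts{m}{\ell}\hat{c}_\ell^{(-k)}=\sum_{m=1}^{n}(-1)^{n-m}m!\sts{n}{m}(m+1)^k.
\]

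It therefore remains to prove that $B_n^{(-k)}=\sum_{m}(-1)^{n-m}m!\sts{n}{m}(m+1)^k$, which is exactly the standard explicit formula for the poly-Bernoulli numbers of negative index coming from the generating function \eqref{gefunpoly} of \cite{Kaneko}. I would establish it from the Callan model. Read $m!\sts{n}{m}(m+1)^k$ as the number of pairs consisting of an ordered set partition $(B_1,\dots,B_m)$ of the red elements $\{\red{1},\dots,\red{n}\}$ (a surjection onto $[m]$, each block written increasingly) together with an assignment of the blue elements $\{\blue{1},\dots,\blue{k}\}$ into the $m+1$ gaps around the blocks, each such pair weighted by $(-1)^{n-m}$. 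Writing the red entries in reading order as a permutation $a_1\cdots a_n$, every descent of this word forces a block boundary while every ascent carries an \emph{optional} boundary, and the weight $(-1)^{n-m}$ equals $(-1)$ raised to the number of ascents at which no boundary is drawn.

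The real work, and the step I expect to be the main obstacle, is the sign-reversing involution $\phi'$ that cancels the redundant configurations. I would locate the leftmost ascent position that is \emph{free to toggle} --- either with no boundary, or with a boundary whose adjacent blue gap is empty --- and switch its status, thereby merging two consecutive increasing blocks or splitting one of them; this flips the parity of the number of uncut ascents and hence the sign, and pairs each non-fixed configuration with a unique partner. The fixed points are the configurations surviving with weight $+1$, and the delicate point is the bookkeeping that identifies them as a canonical family equinumerous with the $(n,k)$-Callan permutations: one must track empty gaps at descents versus ascents and at the two ends, precisely the kind of subtlety already met in the involution $\phi$ of Theorem \ref{theo:orthogonal}. (If one prefers, the last display may instead be quoted directly as the classical explicit formula for $B_n^{(-k)}$, reducing the proof to the orthogonality step alone.)
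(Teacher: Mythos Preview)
Your first stage---rewriting $\hat{c}_\ell^{(-k)}=(-1)^\ell\hat{c}_{\ell,k}$ and collapsing the inner sum via Theorem~\ref{theo:orthogonal} to reach $B_n^{(-k)}=\sum_m(-1)^{n-m}m!\sts{n}{m}(m+1)^k$---is exactly what the paper does, line for line.

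Where you diverge is the combinatorial verification of this last formula. The paper does \emph{not} introduce a new involution $\phi'$ on surjection/gap pairs. Instead it reuses the combinatorial content already extracted from Theorem~\ref{theo:orthogonal}: the inner sum counts $(m,k)$-id-poly-Cauchy permutations (those with $\red{\pi^m}$ the identity). From such an object one builds an $(n,k)$-Callan permutation by first permuting the $m$ red positions in $m!$ ways and then replacing each red element $i$ by a block $B_i$ of an $m$-block partition of $[n]$; the factor $(-1)^{n-m}$ is then handled by a standard inclusion--exclusion on the number of red blocks. This keeps the argument inside the poly-Cauchy framework and avoids any new sign-reversing map.

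Your proposed $\phi'$ is plausible but, as you anticipate, the fixed-point identification is the real work and is not settled in your sketch. The fixed points of your toggle are configurations in which \emph{every} position is a block boundary (so $m=n$, weight $+1$), with the constraint that every ascent-gap carries at least one blue element while descent-gaps and end-gaps are unconstrained; these are not literally Callan permutations, and matching them up still requires a further bijection. The paper's route sidesteps this entirely. Your parenthetical fallback---simply quoting the classical formula for $B_n^{(-k)}$---is of course valid and makes the proof coincide with the paper's algebraic reduction, just without the combinatorial finish.
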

\begin{proof}
Let us rewrite the identity as follow:
\begin{align*}
		B_n^{(-k)} = (-1)^n \sum_{m=1}^{n} m! \sts{n}{m}\sum_{\ell = 1}^{m}(-1)^{\ell}\sts{m}{\ell} \hat{c}_{\ell, k}.		\end{align*}
 By Theorem \ref{theo:orthogonal} the inner sum equals to $(-1)^m(m+1)^k$, and we obtain the classical formula of poly-Bernoulli numbers:
 \begin{align*}
 	B_n^{(-k)} = \sum_{m=1}^{n} m! \sts{n}{m} (-1)^{n+m}(m+1)^k.
 \end{align*}
 Combinatorially, by the proof of the Theorem \ref{theo:orthogonal} after applying the involution, $\phi$, the inner sum $\sum_{\ell = 1}^{m}(-1)^{\ell}\sts{m}{\ell} \hat{c}_{\ell, k}$ counts $(m,k)$-id-poly-Cauchy permutations. Now let $A_m$ be the set of Callan permutations with at most $m$ different red blocks (blocks of elements $\{\red{1},\ldots, \red{n}\}$). We obtain from an $(m,k)$-id-poly-Cauchy permutation an $(n,k)$-Callan permutation from $A_m$ as follows. Given an id-poly-Cauchy permutation, permute first the $m$ red elements in $m!$ ways. Then partition $\{\red{1},\ldots, \red{n}\}$ into $m$ non-empty blocks, $\red{B_1},\red{B_2},\ldots,\red{B_m}$, and replace the red element $i$ in the poly-Cauchy permutation by a block $\red{B_i}$. Applying the inclusion-exclusion principle completes the proof. 
\end{proof}

The other direction can be seen similarly combinatorially.
\begin{theorem}[\cite{Kom7}, Theorem 3.1]
For non-negative $n$ and $k$ it holds 
	\begin{align*}
		\hat{c}_n^{(-k)} = (-1)^n\sum_{\ell = 1}^n \sum_{m=1}^n \frac{1}{m!} \stf{n}{m}\stf{m}{\ell}B_{\ell}^{(-k)}.
	\end{align*}
\end{theorem}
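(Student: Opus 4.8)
=== MY PROOF PROPOSAL ===

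\textbf{The plan} is to establish this as the inverse relation to the previous theorem, exploiting the fact that the Stirling numbers of the first kind are the inverse of the Stirling numbers of the second kind (via the orthogonality relation $\sum_m (-1)^{n-m}\stf{n}{m}\sts{m}{\ell} = \delta_{n\ell}$), while at the same time giving the combinatorial interpretation directly. The cleanest route is the algebraic reduction first, then the combinatorial narrative, mirroring the structure of the proof of the preceding theorem. So first I would reorganize the double sum by reindexing, writing
\begin{align*}
	\hat{c}_n^{(-k)} = (-1)^n\sum_{m=1}^n \frac{1}{m!}\stf{n}{m}\sum_{\ell=1}^m \stf{m}{\ell}B_\ell^{(-k)},
\end{align*}
so that the inner sum $\sum_{\ell=1}^m \frac{1}{(\text{stuff})}\stf{m}{\ell}B_\ell^{(-k)}$ becomes the quantity I must identify. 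The expectation is that this inner sum collapses to $m!(m+1)^k$ up to sign, by inverting the formula $B_m^{(-k)} = \sum_{\ell} \ell!\sts{m}{\ell}(-1)^{m+\ell}(\ell+1)^k$ that was derived in the proof of the previous theorem.

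\textbf{The key algebraic step} is to take the closed form $B_\ell^{(-k)} = \sum_{s=1}^\ell s!\,(-1)^{\ell+s}\sts{\ell}{s}(s+1)^k$, substitute it into the inner sum over $\ell$, and then swap the order of summation so that the Stirling numbers $\stf{m}{\ell}$ and $\sts{\ell}{s}$ appear together. The orthogonality of the two kinds of Stirling numbers should cause all terms to cancel except $s=m$, leaving precisely $m!\,(m+1)^k$ (with the signs arranged to produce the required $(-1)^n$ after the outer summation against $\stf{n}{m}$ collapses to the explicit formula $\hat{c}_n^{(-k)} = (-1)^n\sum_m \stf{n}{m}(m+1)^{-k}\cdot(-1)^m$ stated in the introduction). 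I would then verify that what remains matches the stated explicit formula for $\hat{c}_n^{(-k)}$ term by term, which is the routine bookkeeping.

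\textbf{For the combinatorial argument}, which is what the phrase ``can be seen similarly combinatorially'' promises, I would run the dual of the construction in the previous proof. There, one passed from id-poly-Cauchy permutations to Callan permutations by refining red elements into blocks; here I would go the other direction. Starting from a Callan permutation counted by $B_\ell^{(-k)}$, I would group the maximal red blocks, treat each block's set-partition structure via the first-kind Stirling weight $\stf{n}{m}$ (arranging $n$ red elements into $m$ cycles and reading off representatives), and use the $\frac{1}{m!}$ factor to quotient out the ordering introduced, thereby recovering an id-poly-Cauchy permutation enumerated by $\hat{c}_n^{(-k)}$. The inner sum $\sum_\ell \frac{1}{m!}\stf{m}{\ell}B_\ell^{(-k)}$ should, after the same involution $\phi$ is invoked, collapse to the count $(m+1)^k$ of id-structures on $m$ red elements.

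\textbf{The main obstacle} I anticipate is getting the sign tracking and the $\frac{1}{m!}$ normalization to agree in both the algebraic and combinatorial passes simultaneously: the first-kind Stirling numbers carry implicit signs through the generating-function identity $\lif_k(-\ln(1+t))$, and the division by $m!$ does not have as transparent a combinatorial meaning as the multiplication by $m!$ did in the previous theorem. I would handle this by being explicit that $\frac{1}{m!}\stf{m}{\ell}$ counts the relevant cycle-refinement structures only after averaging over relabelings, and I would double-check the final identity against the small values in Table \ref{tab1} (for instance $\hat{c}_{2,1}=5$) to confirm the signs are correct before committing to the narrative.
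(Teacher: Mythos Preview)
Your algebraic plan is correct: substituting the closed form $B_\ell^{(-k)}=\sum_s s!\,(-1)^{\ell+s}\sts{\ell}{s}(s+1)^k$ into the inner sum and applying the Stirling orthogonality $\sum_\ell (-1)^{m-\ell}\stf{m}{\ell}\sts{\ell}{s}=\delta_{ms}$ collapses the inner sum to $m!(m+1)^k$, after which the outer sum is exactly $(-1)^n\hat c_{n,k}=\hat c_n^{(-k)}$. This is a perfectly valid proof.

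It is, however, a genuinely different route from the paper's. The paper does \emph{not} use orthogonality or the involution $\phi$; it gives a direct constructive argument. Given an $(\ell,k)$-Callan permutation together with a permutation of $[m]$ into $\ell$ cycles (counted by $\stf{m}{\ell}$), one replaces each of the $\ell$ red elements of the Callan permutation by one of the $\ell$ cycles; where several cycles land in the same red block, one multiplies them out in one-line notation. Summing over $\ell$ this produces every permutation of $[m]$ interleaved with the blue blocks, so $\sum_\ell \stf{m}{\ell}B_\ell^{(-k)}$ overcounts these arrangements exactly $m!$ times --- that is how the paper absorbs the $\tfrac{1}{m!}$ combinatorially. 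Finally, each of the $m$ red positions is replaced by a cycle of a permutation of $[n]$ into $m$ cycles (counted by $\stf{n}{m}$), in canonical order, yielding an $(n,k)$-poly-Cauchy permutation.

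So your algebraic argument is shorter and cleaner; the paper's argument stays entirely inside the combinatorial model and handles the $\tfrac{1}{m!}$ as an explicit overcount rather than a quotient. One correction to your combinatorial sketch: invoking $\phi$ here is off-target. The involution $\phi$ acted on set partitions (second-kind structures) to cancel signed terms, whereas the inner sum here involves only first-kind Stirling numbers with no alternating sign to cancel; the paper's construction does not use $\phi$ at all.
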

\begin{proof}
	Take now a $(\ell, k)$-Callan permutation and a permutation of $[m]$ into $\ell$ cycles. Replace each red element in the Callan permutation by a cycle. If there are more cycles in a block take the product of that cycles and record this way $m$ elements in between the blue blocks of the Callan permutation. (So, the $m$ elements are not in cycles anymore, but create subsequences of a permutation of $m$ with places in between.) (Note that if we ignore the blue blocks, the red elements determine a permutation of $m$, and summing up on all $\ell$, every permutation will occur.) 	Order the elements $\{\red{1},\red{2},\ldots, \red{n}\}$ into $m$ cycles and replace each red element in our construction by a cycle in the canonical order (first the cycle with the least greatest element and so on).	This way we obtain a poly-Cauchy permutation.     
\end{proof}
\begin{example}
	We show the procedure in the proof along an example. Let $n=9$, $m=5$, $\ell=3$, $k=6$. Let the $(\ell,k)$-Callan permutation be $\blue{6}\red{23}\blue{135}\red{1}\blue{24}$. The permutation of $[m]$ into $\ell$ cycles $(1)(43)(52)$, and the permutation of $[n]$ into $m$ cycles $(43)(5)(71)(82)(96)$. 
	\begin{align*}
		&\blue{6}\red{23}\blue{135}\red{1}\blue{24}\quad\rightarrow\quad \blue{6}\red{(43)(52)}\blue{135}\red{(1)}\blue{24}\quad\rightarrow\quad \blue{6}\red{5432}\blue{135}\red{1}\blue{24}\\
		&\blue{6}\red{\emptyset\emptyset\emptyset\emptyset}\blue{135}\red{\emptyset}\blue{24}\quad\rightarrow\quad \blue{6}\red{4357182}\blue{135}\red{96}\blue{24}.
	\end{align*}
\end{example}

Next we provide a combinatorial proof for the closed formula of poly-Cauchy numbers. 
\begin{theorem}\label{theo:closed}
For non-negative $n$ and $k$ it holds 
\begin{align}\label{form:closed}
	\hat{c}_{n,k} = \sum_{j=0}^{k}j!\stf{n+1}{j+1}\sts{k+1}{j+1}.
\end{align}
\end{theorem}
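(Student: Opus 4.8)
The plan is to start from the enumerating formula already established, namely $\hat c_{n,k}=\sum_{m=0}^{n}\stf{n}{m}(m+1)^k$, recalling that a poly-Cauchy permutation consists of a permutation of $\{\red 1,\dots,\red n\}$ written in canonical cycle notation with $m$ cycles, together with a distribution of the $k$ blue elements among the $m+1$ slots around those cycles. The whole identity will follow once I rewrite the factor $(m+1)^k$ and then resum against $\stf{n}{m}$. First I would record the two auxiliary identities
\begin{align*}
(m+1)^k &= \sum_{j=0}^{k}\sts{k+1}{j+1}\,m^{\underline{j}}, \\
\sum_{m=0}^{n}\stf{n}{m}\,m^{\underline{j}} &= j!\,\stf{n+1}{j+1},
\end{align*}
where $m^{\underline{j}}=m(m-1)\cdots(m-j+1)$. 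Substituting the first into the enumerating formula and exchanging the order of summation gives
\begin{align*}
\hat c_{n,k}=\sum_{j=0}^{k}\sts{k+1}{j+1}\sum_{m=0}^{n}\stf{n}{m}\,m^{\underline{j}}=\sum_{j=0}^{k}j!\,\stf{n+1}{j+1}\sts{k+1}{j+1},
\end{align*}
which is exactly \eqref{form:closed}. So the whole content sits in the two displayed identities, which I would prove bijectively.

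For the first identity I would read $(m+1)^k$ as the number of functions from the blue set $\{\blue 1,\dots,\blue k\}$ to the $m+1$ slots, adjoin a phantom blue element $\blue{k+1}$ permanently assigned to one distinguished slot, and then record the fibres: the non-empty fibres form a set partition of $\{\blue 1,\dots,\blue{k+1}\}$ into $j+1$ blocks (the distinguished slot always contributing the block that contains $\blue{k+1}$), counted by $\sts{k+1}{j+1}$, while the remaining $j$ blocks are injected into the other $m$ slots in $m^{\underline{j}}$ ways. This matches the slot structure of a poly-Cauchy permutation once I single out one slot (say the one after the last cycle) as the distinguished slot.

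The hard part will be the second identity, which I would establish with the fundamental (Foata) bijection. Writing $m^{\underline{j}}=j!\binom{m}{j}$, the left side counts a permutation $\sigma$ of $\{\red 1,\dots,\red n\}$ with $m$ cycles together with an ordered choice of $j$ of its cycles. Given such data I would adjoin a new largest red element $\red{n+1}$ and absorb the $m-j$ unchosen cycles, via the canonical-word/fundamental bijection, into a single new cycle through $\red{n+1}$; together with the $j$ chosen cycles this produces a permutation of $\{\red 1,\dots,\red{n+1}\}$ with exactly $j+1$ cycles, while the ordering of the $j$ chosen cycles survives as the extra linear order accounting for the factor $j!$. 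The inverse removes $\red{n+1}$, applies the inverse bijection to its cycle to recover the absorbed cycles, and reads the remaining $j$ cycles in their recorded order; verifying that this map is a genuine bijection (in particular that the absorption is invertible and respects the cycle count) is the main obstacle. Finally, threading the $j!$ order through the matching of the $j$ distinguished red cycles with the $j$ ordinary blue blocks converts the three displayed factors into a single direct bijection between $\mathcal{P}_{n,k}$ and triples consisting of a permutation of $\{\red1,\dots,\red{n+1}\}$ into $j+1$ cycles, a partition of $\{\blue1,\dots,\blue{k+1}\}$ into $j+1$ blocks, and a matching of their non-special parts.
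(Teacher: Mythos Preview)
Your proposal is correct and follows essentially the same route as the paper. Both arguments introduce a phantom blue element (your $\blue{k+1}$, the paper's $\blue{*}$) so that the partition of $\overline K$ into $j+1$ blocks accounts for the possibly empty final blue block, and both reduce the red side to the identity $\sum_{m}\binom{m}{j}\stf{n}{m}=\stf{n+1}{j+1}$; the paper simply quotes this identity, whereas you supply a bijective proof of it via the fundamental bijection (absorbing the unchosen cycles into the cycle of $\red{n+1}$), which is a welcome addition but not a genuinely different strategy.
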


\begin{proof}
We construct the $(n,k)$-poly-Cauchy permutations as follows. Let $\overline{K}$ be  the set $[k]$ extended by a special element $\blue{*}$. First partition the set $\overline{K}$ into $j+1$ non-empty blocks.  Second, order the set $[n]$ into $\ell$ cycles in the canonical cycle notation, with $j\leq \ell \leq n$.  Then we choose $j$ of these cycles  to insert  a blue ordinary block (blocks without the element $\blue{*}$) directly before each cycle, so we have $\ell(\ell-1)\cdots (\ell-j+1)=\binom{\ell}{j}j!$ options to insert and organize these blocks. Finally, the extra block, that contains $\blue{*}$ is inserted after the last red cycle, at the end of the arrangement, and we delete the element $\blue{*}$. So if the extra block contains only $\blue{*}$ our poly-Cauchy permutation ends with a red element, if it contains also other elements, it ends with a blue element.  Therefore we have:
\begin{align*}
\hat{c}_{n,k} = \sum_{j=0}^{k}\sts{k+1}{j+1}\sum_{\ell = j}^{n}\stf{n}{\ell}\binom{\ell}{j}j!= \sum_{j=0}^{k}j!\sts{k+1}{j+1}\stf{n+1}{j+1}.
\end{align*}
Notice that in the last equality we use the  identity $\sum_{\ell = j}^{n}\binom{\ell}{j}\stf{n}{\ell} = \stf{n+1}{j+1}$.
\end{proof}

In the following two theorems we give recurrence relations to calculate the sequence $\hat{c}_{n,k}$.
\begin{theorem}\label{teorec}
For $n\geq 1$ and $k\geq 0$ we have
\begin{align*}
	\hat{c}_{n,k} = (n-1)\hat{c}_{n-1,k} +\sum_{i=0}^{k}\binom{k}{i}\hat{c}_{n-1,k-i}.
\end{align*}
\end{theorem}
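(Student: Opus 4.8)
The plan is to give a combinatorial proof by conditioning on the role of the largest red element $\red{n}$ in an $(n,k)$-poly-Cauchy permutation. Recall from the proof of the enumeration theorem that such a permutation is equivalent to the data of a permutation $\sigma$ of $[n]$ written in canonical cycle notation, say with $m$ cycles, together with a choice, for each of the $k$ blue elements, of one of the $m+1$ gaps determined by the cycles (blue elements landing in a common gap being listed in increasing order). Since $\red{n}$ is the global maximum, in canonical notation the cycle containing $\red{n}$ is written first within its cycle and placed last among the cycles. I would split the count according to whether $\red{n}$ is a fixed point of $\sigma$ or not, matching the two terms on the right-hand side.

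First I would treat the case where $\red{n}$ lies in a cycle of length at least two. Deleting $\red{n}$ from its cycle produces a permutation $\sigma'$ of $[n-1]$ with the same number $m$ of cycles, and the classical insertion argument behind the Stirling recurrence $\stf{n}{m}=(n-1)\stf{n-1}{m}+\stf{n-1}{m-1}$ shows that $\red{n}$ could have been inserted directly after any of the $n-1$ elements of $[n-1]$. Because the number of cycles is unchanged, $\sigma'$ again has $m+1$ gaps, so the blue assignment carries over as a function to an $(m+1)$-element set. Summing $(n-1)\stf{n-1}{m}(m+1)^k$ over $m$ yields exactly $(n-1)\hat{c}_{n-1,k}$.

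Next I would treat the case where $\red{n}$ is a fixed point, i.e. a singleton cycle, which sits last in canonical order. Let $i$ be the number of blue elements assigned to the final gap, that is, placed after $\red{n}$; these form an increasing blue block at the very end. I would choose this set of $i$ blue labels in $\binom{k}{i}$ ways, delete $\red{n}$ together with its trailing block, and relabel the remaining $k-i$ blue elements order-isomorphically by $\{\blue{1},\dots,\blue{k-i}\}$. The surviving data is precisely a permutation of $[n-1]$ (with $m-1$ cycles) together with an assignment of the $k-i$ blue elements to its $m$ gaps, that is, an $(n-1,k-i)$-poly-Cauchy permutation. Summing over $i$ from $0$ to $k$ gives $\sum_{i=0}^{k}\binom{k}{i}\hat{c}_{n-1,k-i}$, and adding the two cases produces the claimed recurrence.

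The step I expect to require the most care is the non-fixed case: when $\red{n}$ is inserted into a cycle it becomes that cycle's new maximum, so the cycle is rewritten and relocated to the last position, and the physical gaps are reshuffled relative to $\sigma'$. I would therefore phrase the bijection abstractly, at the level of (cycle structure with $m$ cycles) $\times$ (function from $[k]$ to the index set $\{0,1,\dots,m\}$ of gaps), rather than by tracking physical positions, so that the reindexing of gaps is automatic and the count $(n-1)\hat{c}_{n-1,k}$ is unambiguous. As a consistency check one can substitute the Stirling recurrence into $\hat{c}_{n,k}=\sum_{m}\stf{n}{m}(m+1)^k$ and use $(m+2)^k=\sum_{i=0}^{k}\binom{k}{i}(m+1)^{k-i}$, which reproduces both terms and confirms the combinatorial split.
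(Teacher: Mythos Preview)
Your proof is correct and follows essentially the same decomposition as the paper: since $\red{n}$ always opens the last cycle in canonical notation, your split ``$\red{n}$ is a fixed point / not a fixed point'' coincides with the paper's ``$\red{n}$ is the very last red element / is followed by other red elements,'' and your treatment of the fixed-point case is identical to the paper's. For the non-fixed case the paper supplies a concrete bijection---insert $\red{n}$ before a chosen $i\in[n-1]$ in its cycle and then re-merge $\overline{\pi}^{\,n}$ with the original blue blocks, explicitly absorbing the shift of cycles you flag---whereas you sidestep that reshuffling by counting via the Stirling recurrence at the level of (permutation with $m$ cycles) $\times$ (map $[k]\to\{0,\dots,m\}$); both routes are equally valid and yield $(n-1)\hat{c}_{n-1,k}$.
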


\begin{proof}
The greatest red element, $n$, is surely in the last cycle of the permutation $\red{\pi^n}$, but we consider the cases when it is the very last red element and when it is followed by some other red elements separately. 
An $(n,k)$-poly-Cauchy permutation where $n$ is the last red element can be obtained the following way. First we choose $i$ blue elements ($i$ can be empty) in a block say $\blue{B}$. We take a $(n-1,k-i)$-poly-Cauchy permutation and extend it by the sequence of $\red{n}\blue{B}$ at the end. This can be done in $\sum_{i=0}^{k}\binom{k}{i}\hat{c}_{n-1,k-i}$ ways.

In order to show that $(n,k)$-poly-Cauchy permutations where $n$ is not the last red element is  $(n-1)\hat{c}_{n-1,k} $ we establish a bijection. 

Given an $(n-1,k)$-poly-Cauchy permutation $\pi$ and a value  $1\leq i\leq n-1$ we construct an $(n,k)$-poly-Cauchy permutation $\overline{\pi}$ where $n$ is not the last element as follows. $\pi$ is a merging of $\pi^{n-1}$ and $\pi^k$. Consider $\pi^{n-1}$ as the product of cycles. Insert $n$ before the element $i$ in its cycle, to obtain $\overline{\pi}^n$. Merge now $\overline{\pi}^n$ and $\pi^k$ again together. Note that we did not change the places of where the blue elements are inserted, but we possible changed the relative order of the red cycles, since the cycle containing $9$ we be shifted to the end, and other cycles may shifted to the left by one. Since each step can be reversed, this procedure is a bijection, which implies our statement.
\end{proof}
\begin{example}
We give an example for making the bijection in the previous proof clearer.
Let $\pi=\blue{6}\red{43}\blue{24}\red{561}\blue{135}\red{827}$ and $i=6$. The cycles of $\pi^8$ are $(\red{43})$, $(\red{5})$, $(\red{61})$ and $(\red{827})$. We insert $\red{9}$ after $\red{6}$ and obtain $(\red{61})\rightarrow(\red{691})=(\red{916})$. After merging the cycles and blocks again we obtain $\overline{\pi}=\blue{6}\red{43}\blue{24}\red{5827}\blue{135}\red{916}$.  
\end{example}
\begin{theorem}
For $n\geq 1$ and $k\geq 0$ we have
\begin{align*}
	\hat{c}_{n,k} = \sum_{i=0}^{n}\sum_{j=0}^k (n-1)_i\binom{k}{j} \hat{c}_{n-1-i,k-j}.
\end{align*}
\end{theorem}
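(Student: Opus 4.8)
The plan is to prove the recurrence by a direct combinatorial decomposition that refines the case distinction used in the proof of Theorem~\ref{teorec}: instead of only asking whether $\red{n}$ is the last red element, I peel off the \emph{entire} last red cycle together with the blue block trailing it. Recall that the red part $\red{\pi^n}$ of an $(n,k)$-poly-Cauchy permutation is recorded in canonical cycle notation, so the largest red element $\red{n}$ always opens the last cycle. Write this last cycle as $(\red{n},a_1,\ldots,a_i)$, where $a_1\cdots a_i$ is an ordered choice of $i$ distinct elements from $\{\red{1},\ldots,\red{n-1}\}$ with $0\le i\le n-1$. Because $\red{n}$ is the global maximum, no red element can follow this cycle, so the part of $\pi$ after it is a single (possibly empty) maximal blue block $\blue{B}$; let $j=|\blue{B}|$ with $0\le j\le k$, its entries being written increasingly by condition (A1).

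First I would count the building blocks. The ordered word $a_1\cdots a_i$ can be chosen in $(n-1)_i=(n-1)(n-2)\cdots(n-i)$ ways, and this choice simultaneously fixes which $n-1-i$ red elements remain. The trailing block $\blue{B}$ is determined by its underlying $j$-subset of $\{\blue{1},\ldots,\blue{k}\}$, giving $\binom{k}{j}$ choices and fixing the $k-j$ remaining blue elements. Deleting the last cycle and the trailing block leaves a prefix $\pi'$ which, since conditions (A1) and (A2) depend only on relative order, is an arbitrary $(n-1-i,\,k-j)$-poly-Cauchy permutation after relabelling its elements order-isomorphically; there are $\hat{c}_{n-1-i,k-j}$ of these. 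Multiplying and summing over $i$ and $j$ yields
\begin{align*}
\hat{c}_{n,k}=\sum_{i=0}^{n-1}\sum_{j=0}^{k}(n-1)_i\binom{k}{j}\hat{c}_{n-1-i,k-j},
\end{align*}
and since $(n-1)_n=0$ the term $i=n$ may be adjoined freely, giving the stated identity.

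The point that needs care — and the main thing to verify — is that the above assignment is a genuine bijection, i.e.\ that reassembling $\pi'$, the cycle $(\red{n},a_1,\ldots,a_i)$, and the block $\blue{B}$ in this order always returns a valid $(n,k)$-poly-Cauchy permutation from which the original data is uniquely recovered. For validity one checks (A1) and (A2): appending $\blue{B}$ at the very end creates a legitimate increasing maximal blue block, while appending a cycle headed by $\red{n}$ keeps the left-to-right maxima of the red substrings increasing, since $\red{n}$ exceeds every earlier red entry (even when $\pi'$ ends in a red cycle and the two cycles merge into one maximal red substring, canonical order is preserved). Uniqueness is clear because $\red{n}$ identifies the last cycle and the trailing blue block is read off from the tail of $\pi$. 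As a cross-check, one may instead prove the identity by induction on $n$ from Theorem~\ref{teorec}: its second term is exactly the $i=0$ summand above, and the factorization $(n-1)_i=(n-1)\,(n-2)_{i-1}$ turns $(n-1)\hat{c}_{n-1,k}$, expanded by the inductive hypothesis, into the remaining summands $i\ge 1$.
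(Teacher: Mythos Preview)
Your proof is correct and follows essentially the same decomposition as the paper: strip off the last red cycle (headed by $\red{n}$, with $i$ further elements chosen in $(n-1)_i$ ways) together with the trailing blue block of size $j$, leaving a smaller poly-Cauchy permutation counted by $\hat{c}_{n-1-i,k-j}$. Your write-up is more careful than the paper's---in particular the explicit check that appending the new cycle preserves (A2) even when $\pi'$ ends in a red element, and the inductive cross-check via Theorem~\ref{teorec}---but the underlying argument is the same.
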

\begin{proof}
	Let $j$ be the number of the last blue block (possibly empty) and $i$ be the number of elements that the last red cycle contain besides the element $n$. (In the standard cycle notation the last cycle necessarily contains $n$.) We choose the elements for the block in $\binom{k}{j}$ ways and construct the last cycle by arranging $i$ elements after $n$ in $\binom{n-1}{i}i!=(n-1)_i$ ways. From the remaining elements we construct a poly-Cauchy permutation. 
\end{proof}

The \emph{Eulerian numbers} $\eul{n}{k}$ counts the number of permutations $\pi=\pi_1\pi_2\cdots \pi_n$  with  $k-1$ descents, that is  $k-1=|\{i\in[n-1]: \pi_i>\pi_{i+1} \}|$. Note that a permutation $\pi$ of $[n]$ with $k-1$ descents is the union of $k$ increasing subsequences of consecutive entries,
also called \emph{ascending runs}.  For example, if $\pi=258193647$, then $3, 5$, and $7$ are the descents. Moreover, $\pi$ is the union of the ascending runs: $258, 19, 36$, and $47$.   In Theorem \ref{Euler} we  give a combinatorial identity involving the Eulerian numbers and the sequence $\hat{c}_{n,k}$.

\begin{theorem}\label{Euler}
For non-negative $n$ and $k$ it holds 
\begin{align*}
	\hat{c}_{n,k} = \sum_{m=0}^n\sum_{i=0}^m \binom{k-i}{m-i}\eul{k}{i}\stf{n+1}{m+1}.
\end{align*}
\end{theorem}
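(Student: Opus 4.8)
The plan is to refine the basic cycle--gap construction behind $\hat{c}_{n,k}=\sum_{\ell=0}^{n}\stf{n}{\ell}(\ell+1)^{k}$ by sorting the blue insertions according to how many descents the blue word $\blue{\pi^k}$ has when the permutation is read from left to right. First I would fix a permutation of the red elements in canonical cycle notation with exactly $\ell$ cycles; this creates $\ell+1$ ordered gaps, and a blue insertion amounts to assigning each blue element to a gap, the entries inside a gap being increasing. Reading the blue elements gap by gap produces the word $\blue{\pi^k}$, and since every gap is increasing, each descent of $\blue{\pi^k}$ must sit at a boundary between two distinct gaps. The goal of the first half of the argument is therefore to count, for fixed $\ell$, the blue insertions whose word has exactly $i$ descents.

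This count is the crux of the proof and I expect it to be the main obstacle: it is a poly-Cauchy incarnation of Worpitzky's identity. I would encode a blue insertion by the weakly increasing sequence $0\le g_1\le\cdots\le g_k\le \ell$ of gap labels assigned to the letters of $\blue{\pi^k}$ in reading order. The ``increasing inside a gap'' condition translates exactly into $g_j<g_{j+1}$ at every descent position of $\blue{\pi^k}$ and $g_j\le g_{j+1}$ at every ascent. Subtracting from each $g_j$ the number of descent positions lying before $j$ turns these constrained sequences into arbitrary weakly increasing sequences of length $k$ with values in $\{0,1,\dots,\ell-i\}$, of which there are $\binom{\ell-i+k}{k}$. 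Summing over the $\eul{k}{i}$ blue words with $i$ descents shows that, for fixed $\ell$, the number of blue insertions is $\sum_{i}\eul{k}{i}\binom{\ell+k-i}{k}$, and in particular $(\ell+1)^k=\sum_i \eul{k}{i}\binom{\ell+k-i}{k}$. (Here $\eul{k}{i}$ must be read as the number of blue words with $i$ descents; this is the only point where I would double-check the indexing against Table~\ref{tab1}, e.g.\ against $\hat{c}_{2,2}=13$.)

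With the descent-refined count in hand, the rest is algebraic bookkeeping in the spirit of the proof of Theorem~\ref{theo:closed}. I would first apply Vandermonde's identity to split off the dependence on $\ell$, writing $\binom{\ell+k-i}{k}=\sum_{m}\binom{\ell}{m}\binom{k-i}{k-m}$, and substitute into $\hat{c}_{n,k}=\sum_{\ell}\stf{n}{\ell}\sum_i\eul{k}{i}\binom{\ell+k-i}{k}$. Interchanging the order of summation isolates the factor $\sum_{\ell}\stf{n}{\ell}\binom{\ell}{m}$, which collapses to $\stf{n+1}{m+1}$ by the very Stirling identity already used in Theorem~\ref{theo:closed}. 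Finally rewriting $\binom{k-i}{k-m}=\binom{k-i}{m-i}$ puts the double sum into the stated form $\sum_{m=0}^{n}\sum_{i=0}^{m}\binom{k-i}{m-i}\eul{k}{i}\stf{n+1}{m+1}$, which completes the proof.

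The genuinely combinatorial content is concentrated entirely in the Worpitzky step of the second paragraph, where one must verify that forcing a strict jump at each descent yields precisely $\binom{\ell-i+k}{k}$ gap-label sequences; the Vandermonde split and the Stirling collapse are routine and mirror manipulations already carried out in the paper. An alternative route would simply cite Worpitzky's identity as a black box for $(\ell+1)^k$, but giving it the gap-insertion interpretation keeps the argument within the elementary combinatorial framework emphasised throughout the paper.
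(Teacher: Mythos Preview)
Your proof is correct, and your caution about the Eulerian indexing is well placed: the formula only comes out right when $\eul{k}{i}$ is read as ``permutations of $[k]$ with $i$ descents,'' which is \emph{not} the convention the paper declares a few lines before the theorem (with the ``$i$ ascending runs'' convention the sum gives $5$ rather than $13$ for $n=k=2$). So your double-check is exactly the right reflex.

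The route you take is related to but genuinely different from the paper's. The paper argues from the blue side: fix the blue word first, note that its ascending runs must be separated by red cycles, then \emph{mark} $m-i$ additional ascent positions and drop chosen red cycles into the resulting $m$ slots; the factor $\stf{n+1}{m+1}$ enters directly as ``permutation of $[n]$ into $\ell$ cycles with $m$ of them distinguished''. Your argument goes from the red side: fix the red cycle structure, encode each blue insertion by the weakly increasing gap-label word, and count these refined by the number of descents---which is precisely a combinatorial proof of Worpitzky's identity for $(\ell+1)^k$. You then pay for this with the Vandermonde split and only at the very end invoke the Stirling collapse $\sum_\ell \stf{n}{\ell}\binom{\ell}{m}=\stf{n+1}{m+1}$. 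The paper's version is a one-step bijection with no intermediate algebra; yours makes the connection to Worpitzky explicit and isolates exactly where the Eulerian numbers enter, at the cost of two routine identities in the back half. Both are valid and both lean on the same Stirling identity already used in Theorem~\ref{theo:closed}.
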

\begin{proof}
We turn now our point of view a little over and consider the construction of a poly-Cauchy permutation as inserting red cycles into a permutation of blue elements, $\blue{\pi^k}$. The properties of poly-Cauchy permutations require that the ascending runs of $\pi^k$ are separated by a red cycle. But it is not forbidden to insert cycles after an ascent, cutting ascending runs this way into more pieces. Hence, the $(n,k)$-poly-Cauchy permutations can be constructed by the following process.  Take a permutation of $[k]$ with $i$ ascending runs and mark $m-i$ ascents. Permute the red elements into at least $m$ cycles, say $\ell$, and choose $m$ out of them. Merge the two permutation in order to obtain a poly-Cauchy permutation by inserting a chosen cycle after each ascending runs and marked ascents. 
\end{proof}

\section{Poly-Cauchy polynomials of the second kind}
This section introduces a combinatorial interpretation for the poly-Cauchy polynomials. 
The poly-Cauchy polynomials of the second kind, denoted by $\hat{c}_{n}^k(z)$, where introduced by Kamano  and  Komatsu \cite{KanKom} by using the following analytical formula 
\begin{align*}
\hat{c}_{n}^{(k)}(z):=n!\underbrace{\int_{0}^1 \cdots \int_{0}^1}_{k}\binom{-x_1x_2\cdots x_k-z}{n} \,dx_1 \,dx_2\cdots dx_k.
\end{align*}
The above definition is equivalent to the explicit formula
\begin{align*}
\hat{c}_{n}^k(z)= (-1)^{n}\sum_{m=0}^{n}\stf{n}{m} \sum_{i=0}^m\binom mi \frac{(-z)^i}{(m-i+1)^k}.
\end{align*}

We consider poly-Cauchy polynomials of the second kind without negative $k$-indices and ignore the negative sign, i.e.,

\begin{align}\label{form:pCpol_basic}
	\hat{c}_{n,k}(z):= \sum_{m=0}^{n}\stf{n}{m} \sum_{i=0}^m\binom mi (-1)^i(m-i+1)^kz^i
\end{align}
We define \emph{partial poly-Cauchy permutations} as a pair $P=(\pi, \sigma)$, where the reduction of $\pi$ on the red elements is a partial permutation of $[n]$ (not all elements of $[n]$ are included) and $\sigma$ is a permutation of the remaining elements of $[n]$. 
Further, we define the weight of a partial poly-Cauchy permutation, $w(P)=w(\pi, \sigma)$ as the number of cycles in the permutation $\sigma$. 
\begin{example}
	The weight of the partial poly-Cauchy permutation $(\blue{24}\red{61}\blue{6}\red{78}\blue{135}\red{93}, \red{(24)(5)})$ is $2$.
\end{example}

Let $\mathcal{PP}_{n,k}$ denote the set of partial poly-Cauchy permutations.
\begin{theorem} We have
\begin{align*}
	\sum_{P\in \mathcal{PP}_{n,k}}(-z)^{w(P)}=\hat{c}_{n,k}(z). 
\end{align*}
\end{theorem}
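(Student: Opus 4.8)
The plan is to interpret both sides combinatorially and match them term by term, exploiting the definition of partial poly-Cauchy permutations together with the formula \eqref{form:pCpol_basic}. The right-hand side is
\begin{align*}
\hat{c}_{n,k}(z)= \sum_{m=0}^{n}\stf{n}{m} \sum_{i=0}^m\binom mi (-1)^i(m-i+1)^kz^i,
\end{align*}
so I read the factor $\stf{n}{m}$ as arranging the red elements $\{\red{1},\ldots,\red{n}\}$ into $m$ cycles, exactly as in the enumeration of ordinary poly-Cauchy permutations in the first theorem. The new ingredient is the inner sum $\sum_{i=0}^m\binom mi (-1)^i(m-i+1)^k z^i$, and the goal is to show that this is precisely the generating function $\sum_P (-z)^{w(P)}$ over those partial poly-Cauchy permutations whose red part reduces to a permutation with $m$ cycles.

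First I would fix the number $m$ of red cycles and explain how a partial poly-Cauchy permutation $P=(\pi,\sigma)$ of ``type $m$'' is built. I would let $i=w(P)$ be the number of cycles of $\sigma$, i.e.\ the number of red cycles that are \emph{removed} from $\pi$ and placed into the separate permutation $\sigma$. Choosing which $i$ of the $m$ cycles go into $\sigma$ accounts for $\binom{m}{i}$, and the sign $(-z)^{w(P)}=(-1)^i z^i$ supplies both the $(-1)^i$ and the $z^i$ in the inner sum. The remaining $m-i$ red cycles stay in $\pi$, into whose $m-i+1$ gaps (before, between, and after the surviving cycles) the blue elements $\{\blue{1},\ldots,\blue{k}\}$ are distributed in increasing blocks; as in the proof of the first theorem this insertion contributes the factor $(m-i+1)^k$. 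Multiplying these contributions gives exactly $\stf{n}{m}\binom mi(-1)^i(m-i+1)^k z^i$, and summing over $i$ and then over $m$ reproduces $\hat c_{n,k}(z)$.

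The one point requiring care — and the main obstacle — is to confirm that this correspondence is a genuine bijection onto all of $\mathcal{PP}_{n,k}$ with the stated weight, rather than merely a numerical coincidence. Concretely I would verify: (i) that specifying the $m$ cycles on $[n]$, the choice of which $i$ of them form $\sigma$, and the blue insertion into the surviving structure, determines $P=(\pi,\sigma)$ uniquely, and conversely that every partial poly-Cauchy permutation arises this way exactly once; and (ii) that the blue elements are inserted only into the $m-i+1$ gaps of the \emph{surviving} red cycles, so that no blue block is attached to a cycle that has been moved into $\sigma$. This is consistent with the definition, since the reduction of $\pi$ onto red elements is only a partial permutation of $[n]$ and $\sigma$ handles the remaining elements separately. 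Once this bijection is checked, summing $(-z)^{w(P)}$ over each type-$m$ class yields $\stf{n}{m}\sum_{i=0}^m\binom mi(-1)^i(m-i+1)^k z^i$, and the total over all $m$ is $\hat c_{n,k}(z)$, as claimed.
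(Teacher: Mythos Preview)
Your argument is correct and is precisely the expansion the paper's one-line proof (``clearly equal to \eqref{form:pCpol_basic} according to our previous considerations'') is gesturing at: combine the cycles of $\sigma$ with the red cycles of $\pi$ to obtain a permutation of $[n]$ with $m$ cycles, mark the $i$ that belong to $\sigma$, and distribute the blue elements among the $m-i+1$ gaps of the remaining cycles. The bijection check you flag in (i)--(ii) goes through exactly as you describe, so there is no gap.
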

\begin{proof}
 The sum is clearly equal to the formula \eqref{form:pCpol_basic} according to our previous considerations.
\end{proof}
Recall the well-known relation between the falling factorial $(x)_n=x(x-1)\cdots(x-n+1)$ and the (signed) Stirling number of the first kind, $s(n,k)$: 
\begin{align*}
	(x)_n = \sum_{k=0}^n s(n,k)x^k.
\end{align*}
From the combinatorial point of view this identity expresses that if we weight a permutation $\pi$ with the number of its cycles, i.e., $w(\pi)=\#\, \mbox{of cycles}$, we have 
\[(-1)^n(x)_n=\sum_{\pi\in S_n}(-x)^{w(\pi)},\]
where $S_n$ denotes as usual the set of permutations of $[n]$. 
The following theorem is based on this classical result. 
\begin{theorem}[\label{theo:pol_conv}\cite{KimKim}, Theorem 7]
For non-negative $n$ and $k$ it holds
	\begin{align}\label{form:pol_1}
		\hat{c}_{n}^{(-k)}(z)=\sum_{m=0}^n\binom{n}{m}\hat{c}_{n-m}^{(-k)}(z)_m.
\end{align}
\end{theorem}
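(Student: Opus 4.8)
The plan is to read the identity off the weighted interpretation of the poly-Cauchy polynomial established immediately before, namely $\hat{c}_{n,k}(z)=\sum_{P\in\mathcal{PP}_{n,k}}(-z)^{w(P)}$, combined with the sign dictionary $\hat{c}_n^{(-k)}(z)=(-1)^n\hat{c}_{n,k}(z)$ and $\hat{c}_{n-m}^{(-k)}=(-1)^{n-m}\hat{c}_{n-m,k}$ that follows from \eqref{form:pCpol_basic} (the constant $\hat{c}_{n-m}^{(-k)}$ being $\hat{c}_{n-m}^{(-k)}(0)$). The whole argument is then just a reorganization of the sum over partial poly-Cauchy permutations, so no new combinatorial object is needed.

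First I would split $\mathcal{PP}_{n,k}$ according to the number $m$ of red elements that are handed to $\sigma$. A partial poly-Cauchy permutation $P=(\pi,\sigma)$ is determined by three independent choices: which $m$ of the $n$ red elements form the support of $\sigma$ (there are $\binom{n}{m}$ of these), an $(n-m,k)$-poly-Cauchy permutation $\pi$ on the remaining $n-m$ red elements together with the $k$ blue elements, and the permutation $\sigma$ itself. Because the defining conditions (A1)--(A2) depend only on the relative order of the red elements and on the placement of the blue blocks, the number of choices for $\pi$ is $\hat{c}_{n-m,k}$ for \emph{any} $(n-m)$-element red set, not only for $\{\red{1},\dots,\red{n-m}\}$. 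Since the weight satisfies $w(P)=w(\sigma)$, summing gives
\[
\hat{c}_{n,k}(z)=\sum_{m=0}^{n}\binom{n}{m}\hat{c}_{n-m,k}\sum_{\sigma\in S_m}(-z)^{w(\sigma)},
\]
where the $m=0$ term records the genuine (non-partial) poly-Cauchy permutations with empty $\sigma$.

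Finally I would invoke the classical cycle-counting identity recalled in the text, $\sum_{\sigma\in S_m}(-z)^{w(\sigma)}=(-1)^m(z)_m$, to rewrite the inner sum, and then translate back into the signed notation by multiplying through by $(-1)^n$. On the left this produces $\hat{c}_n^{(-k)}(z)$, while on the right the three stray signs combine as $(-1)^n=(-1)^{n-m}(-1)^m$, turning $(-1)^m\hat{c}_{n-m,k}(z)_m$ into $\hat{c}_{n-m}^{(-k)}(z)_m$; this is exactly \eqref{form:pol_1}. I do not anticipate a real obstacle here: the entire content sits in the bookkeeping of the first step, and the only points demanding care are the label-invariance claim (a poly-Cauchy permutation on an arbitrary $(n-m)$-subset of red elements is counted by $\hat{c}_{n-m,k}$) and making sure the three sign conversions cancel to a single global $(-1)^n$.
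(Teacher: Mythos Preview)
Your proposal is correct and follows essentially the same argument as the paper: split $\mathcal{PP}_{n,k}$ according to the size $m$ of $\sigma$, count the three independent pieces, apply the cycle-counting identity $\sum_{\sigma\in S_m}(-z)^{w(\sigma)}=(-1)^m(z)_m$, and then convert signs via $\hat{c}_{n-m,k}=(-1)^{n-m}\hat{c}_{n-m}^{(-k)}$. Your write-up is in fact a bit more explicit about the label-invariance and the sign bookkeeping than the paper's own proof.
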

\begin{proof}
Let $m$ be the number of elements that are contained in $\sigma$ in the partial poly-Cauchy permutation $P=(\pi, \sigma)$. Choose these $m$ elements in $\binom{n}{m}$ ways and construct $\pi$ from the remaining elements in $\hat{c}_{n-m,k}$ ways. Hence, we have
 \begin{align*}
\sum_{P\in \mathcal{PP}_{n,k}}(-z)^{w(P)}=\sum_{m=0}^n\binom{n}{m}\hat{c}_{n-m,k}(-1)^m(z)_m.
\end{align*} 
Since $\hat{c}_{n-m,k}=(-1)^{n-m}\hat{c}_{n-m}^{(-k)}$, the theorem follows. 
\end{proof}
The theorems proven in the previous section can be generalized easily to partial poly-Cauchy permutations, and hence to poly-Cauchy polynomials. As an example we present the combinatorial proof for the generalization of Theorem \ref{theo:orthogonal}. 
\begin{theorem}[\cite{KanKom}, Theorem 4]
For non-negative $n$ and $k$ it holds	
\begin{align*}
	\sum_{m=0}^{n}(-1)^m\sts{n}{m}\hat{c}_{m,k}(z) =\sum_{i=0}^{n}\binom{n}{i}(-1)^iz^i(n-i+1)^k.
\end{align*}
\end{theorem}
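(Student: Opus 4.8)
The plan is to run the involution of Theorem~\ref{theo:orthogonal} on the larger class of partial poly-Cauchy permutations, so that the variable $z$ is carried along by the weight $w$. First I would rewrite the left-hand side by substituting the interpretation $\hat{c}_{m,k}(z)=\sum_{P\in\mathcal{PP}_{m,k}}(-z)^{w(P)}$ established just above. An object counted by $(-1)^m\sts{n}{m}\hat{c}_{m,k}(z)$ is then a partition of $\{\red{1},\dots,\red{n}\}$ into $m$ blocks (this is where the sign $(-1)^m$ lives), followed by a partial poly-Cauchy permutation built on these $m$ blocks: the blocks are arranged into cycles, some of the cycles are declared to form the $\sigma$-part (each such cycle contributing a factor $-z$), and the blue blocks are inserted among the remaining cycles. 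Thus the left-hand side becomes a signed, $z$-weighted sum over partial poly-Cauchy permutations equipped with an extra block structure on the red elements, exactly parallel to the set-up in the proof of Theorem~\ref{theo:orthogonal}.

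Next I would define the sign-reversing involution $\phi$ just as before, but now permitted to act on the cycles of the $\pi$-part and of the $\sigma$-part alike. Let $a$ be the greatest red element that is not alone in its cycle; if $a$ occupies a singleton block, merge that block with the block following it in the same cycle, and otherwise split $a$ off into its own singleton block. Since this surgery happens entirely inside one cycle, it changes the number of blocks $m$ by exactly one, flipping $(-1)^m$, while it neither changes the number of cycles nor alters which cycles are assigned to $\sigma$; consequently the weight $w(P)$, and hence the exponent of $z$, is preserved. Checking this weight-preservation together with the fact that $\phi$ is a genuine involution --- that $a$ remains the largest red element not alone in a cycle after the move, and that merging and splitting invert one another --- is the heart of the argument and the step I expect to need the most care. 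The new feature compared with Theorem~\ref{theo:orthogonal} is precisely the added bookkeeping that $\sigma$ must be left structurally intact, so that no block is ever transferred between the $\pi$- and $\sigma$-parts.

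Finally I would read off the fixed points. A configuration is fixed by $\phi$ precisely when no eligible $a$ exists, that is, when every red element is alone in its cycle; this forces all blocks to be singletons, so $m=n$, and every cycle of both the $\pi$- and the $\sigma$-part to be a singleton. Such a fixed point is specified by choosing the $i$ singletons that constitute the $\sigma$-part, contributing $\binom{n}{i}$ together with a factor $(-z)^i$, and an id-poly-Cauchy permutation on the remaining $n-i$ red singletons with the $k$ blue elements distributed among the $n-i+1$ available gaps, contributing $(n-i+1)^k$. Collecting these contributions over $0\le i\le n$, together with the overall sign $(-1)^{m}=(-1)^n$ coming from the all-singleton partition --- precisely the role played by the factor $(-1)^n$ in Theorem~\ref{theo:orthogonal} --- reproduces the right-hand side and completes the proof.
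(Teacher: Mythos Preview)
Your proposal is correct and follows exactly the paper's approach: apply the involution $\phi$ of Theorem~\ref{theo:orthogonal} to partial poly-Cauchy permutations equipped with the extra block structure, and read off the fixed points as id-type partial permutations. The paper's own proof is a two-sentence sketch of this same idea; you have simply filled in the bookkeeping (in particular that $\phi$ leaves the $\sigma$-part and hence the $z$-weight untouched) that the paper leaves implicit.
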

\begin{proof}
	Applying the involution $\phi$ of the proof of Theorem \ref{theo:orthogonal} on the set of partial poly-Cauchy permutations, $\mathcal{PP}$, we recognize that only such partial poly-Cauchy permutations do not cancel in that the red elements are in increasing order. This means now that $\sigma$ is also fixed as an identity permutation on the elements that are contained there. So the weight of $\sigma$ is simply $(-z)^i$, if $i$ is the size of $\sigma$. 
\end{proof}
As we see the proof is straightforward, so we omit the proof of other generalizations. Instead, we show combinatorially two identities that are typical for polynomials. Moreover, in these proofs we introduce another combinatorial interpretation of the poly-Cauchy polynomials (which arises also very naturally). 

Looking at the formula \eqref{form:pol_1} we can give the combinatorial interpretation for $z$ positive integers as follow. Consider now a poly-Cauchy permutation with $z$ ordered boxes. Choose some elements of $[n]$ and put them into one of the boxes and form from the remaining elements a poly-Cauchy permutation. We call such a construction an \emph{extended poly-Cauchy permutations}. Having this picture in mind, the following two identities are immediate. 
\begin{theorem}[\cite{KimKim}]
We have
\begin{align}
	\hat{c}_{n,k}(x+1) = \hat{c}_{n,k}(x) + n\hat{c}_{n-1,k}(x).
\end{align}
\end{theorem}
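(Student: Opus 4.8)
The plan is to prove the identity by reducing it to a statement about positive integers and then reading it off the \emph{extended poly-Cauchy permutation} model introduced just above. Both sides are polynomials in $x$ of degree at most $n$, so it suffices to verify the equality for every positive integer $x$; agreement at infinitely many points then forces the full polynomial identity. For a fixed positive integer $x$ I would work with the extended picture: an extended poly-Cauchy permutation consists of $x$ ordered boxes together with a choice of a subset of $[n]$ distributed into the boxes and a poly-Cauchy permutation formed from the remaining elements, the whole family being governed by $\hat{c}_{n,k}$ once the weight $(-x)^{w(P)}$ coming from the partial-permutation description is taken into account.

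The combinatorial core I would isolate is a \emph{pointing bijection} on a distinguished box. Single out one box, say the last one. If that box is empty, the configuration is exactly an extended poly-Cauchy permutation on the same $n$ elements using only the remaining boxes, which reproduces the $\hat{c}_{n,k}$ contribution. If the distinguished box is nonempty, I would strip off its first entry $a$: what remains is an extended object built from the other $n-1$ elements while still using all the boxes, and $a$ may be any of the $n$ elements. Prepending a chosen element to the distinguished box inverts this operation, so the nonempty case is in bijection with (choice of $a\in[n]$) $\times$ (extended object on $n-1$ elements), producing the factor $n$ and the $n\,\hat{c}_{n-1,k}$ contribution. This is the mechanism that yields a three-term recurrence relating the $n$-element family, the same family with one box removed, and the $(n-1)$-element family.

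The step I expect to be the main obstacle is the passage from this clean box-counting statement back to the polynomial $\hat{c}_{n,k}$, namely aligning the shift $x\mapsto x+1$ with the box operation and tracking the sign introduced by the weight $(-x)^{w(P)}$. Removing a box and shifting the argument act in opposite directions, and each cycle placed in a box carries a factor $-x$ rather than $x$, so the signs and the exact arguments appearing in the recurrence are delicate and must be bookkept with care. To pin them down I would cross-check algebraically against the convolution identity $\hat{c}_{n,k}(z)=\sum_{m=0}^{n}\binom{n}{m}\hat{c}_{n-m,k}(-1)^m(z)_m$ established earlier, combined with the Pascal-type relation $(z+1)_m=(z)_m+m\,(z)_{m-1}$ for falling factorials; substituting the latter into the former collapses at once to a three-term recurrence in which the coefficient $\binom{n}{m}m=n\binom{n-1}{m-1}$ reassembles $\hat{c}_{n-1,k}$, and this computation fixes the precise form of the identity.
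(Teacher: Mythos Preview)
Your approach is the paper's: read $\hat{c}_{n,k}(x+1)$ as the count of extended poly-Cauchy permutations with $x+1$ boxes and split on whether the last box is empty. The paper's argument is exactly this two-case split, with no polynomial-to-integer reduction or algebraic cross-check needed.

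One slip to fix in your combinatorial description: in the extended model each box receives \emph{at most one} red element, because the factor $(z)_m$ in the convolution $\hat{c}_{n,k}(z)=\sum_m\binom{n}{m}\hat{c}_{n-m,k}(-1)^m(z)_m$ counts \emph{injective} placements of the $m$ chosen elements into $z$ boxes. Hence a nonempty last box holds exactly one element $a$; removing $a$ \emph{together with} that box leaves $n-1$ elements among the remaining $x$ boxes, giving $n\,\hat{c}_{n-1,k}(x)$ on the nose. Your phrasing ``strip off its first entry \dots\ while still using all the boxes'' suggests boxes may hold several elements and that $x+1$ boxes survive, which would yield $n\,\hat{c}_{n-1,k}(x+1)$ instead --- the wrong recurrence. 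Your algebraic fallback via $(z+1)_m=(z)_m+m(z)_{m-1}$ does land on the correct identity, so the proof would ultimately succeed, but the bijective picture should be amended as above; once that is done the ``delicate'' sign and shift bookkeeping you worry about disappears entirely.
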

\begin{proof}
	The left hand side counts extended poly-Cauchy permutations with $x+1$ boxes. There are two cases, the last box contains an element, or not. If the last box is empty, we can delete it and the number of such extended permutations is just $ \hat{c}_{n,k}(x)$. If the last box contains any of the $n$ elements (that can be chosen in $n$ ways) the remaining $n-1$ elements create with the $k$ elements an extended poly-Cauchy permutation with $x$ boxes. 
\end{proof}
\begin{theorem}[\cite{KimKim}, Eq. (33)]
\begin{align}
	\hat{c}_{n,k}(x+y) = \sum_{j=0}^n\binom{n}{j}\hat{c}_{j,k}(x)(y)_{n-j}
\end{align}
\end{theorem}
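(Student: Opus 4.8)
The plan is to prove the identity by interpreting both sides as counting \emph{extended poly-Cauchy permutations}, exactly the objects introduced just before the preceding two theorems, and then passing from positive integers to a polynomial identity. First I would fix nonnegative integers $x$ and $y$ and recall that $\hat{c}_{n,k}(x+y)$ counts the extended poly-Cauchy permutations on $[n]$ equipped with $x+y$ ordered boxes: one chooses a subset of $[n]$, distributes its elements among the $x+y$ boxes (at most one per box), and arranges the remaining red elements together with the $k$ blue elements into an ordinary poly-Cauchy permutation. The whole point is to read off the right-hand side from a natural splitting of the box sequence into two consecutive blocks.

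The key step is this splitting. I would mark off the first $x$ boxes as the \emph{$x$-boxes} and the last $y$ boxes as the \emph{$y$-boxes}, a split that is well defined because the boxes are linearly ordered. Given an extended permutation, let $S$ be the set of red elements occupying the $y$-boxes and write $|S|=n-j$. Deleting the $y$-boxes together with the elements of $S$ leaves the remaining $j$ red elements, the $k$ blue elements, and the $x$ ordered $x$-boxes; this residual data is precisely an extended poly-Cauchy permutation on a $j$-element set with $x$ boxes. Conversely, any extended poly-Cauchy permutation on the complementary $j$ elements with $x$ boxes, together with an injective placement of the $n-j$ elements of $S$ into the $y$ ordered boxes, reconstructs the original object, so the correspondence is a bijection.

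Counting the pieces then yields the formula. The set $S$ can be chosen in $\binom{n}{n-j}=\binom{n}{j}$ ways; its elements are placed into the $y$ ordered boxes with at most one per box in $(y)_{n-j}$ ways; and the residual structure on the remaining $j$ red elements is counted by $\hat{c}_{j,k}(x)$. Summing over the possible sizes $j=0,1,\dots,n$ of the part unoccupied by the $y$-boxes gives
\begin{align*}
\hat{c}_{n,k}(x+y)=\sum_{j=0}^{n}\binom{n}{j}\hat{c}_{j,k}(x)(y)_{n-j}
\end{align*}
for all nonnegative integers $x,y$. Since both sides are polynomials in $x$ and $y$ that agree on all pairs of nonnegative integers, they agree identically, which is the claimed identity.

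The step I expect to be the main obstacle is verifying that the splitting is a genuine bijection rather than an overcount. One must check that the linear order on the boxes makes the decomposition into $x$-boxes and $y$-boxes canonical, and that the residual object carries no hidden interaction between the two box-blocks --- in other words, that the arrangement of the unboxed red elements with the blue elements is independent of how $S$ is distributed among the $y$-boxes. This independence is exactly what lets the three factors $\binom{n}{j}$, $(y)_{n-j}$, and $\hat{c}_{j,k}(x)$ multiply, and it is the only place where the ordered-box model must be handled with care.
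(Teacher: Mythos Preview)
Your proof is correct and follows essentially the same approach as the paper: both interpret $\hat{c}_{n,k}(x+y)$ via extended poly-Cauchy permutations with $x+y$ ordered boxes, split the boxes into the first $x$ and last $y$, and observe that the elements placed in the $y$-boxes contribute $(y)_{n-j}$ while the remaining $j$ elements together with the $x$-boxes form an extended poly-Cauchy permutation counted by $\hat{c}_{j,k}(x)$. Your write-up is more explicit about the bijectivity check and about the passage from nonnegative integer values of $x,y$ to the full polynomial identity, but the underlying argument is the same.
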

\begin{proof}
	The left hand side counts extended poly-Cauchy permutations with $x+y$ boxes. We obtain such an object by choosing $j$ elements out of $[n]$ from that we construct an extended poly-Cauchy permutation with $x$ boxes, and we put the remaining $n-j$ elements into the remaining $y$ boxes. This construction corresponds to the right hand side.
\end{proof}

\section{Other generalizations}
In this section we recall some of the generalizations of poly-Bernoulli numbers and possible extensions of our combinatorial model in these directions.
\subsection{Shifted poly-Cauchy numbers}
Let $\alpha$ be a positive real number. The shifted poly-Cauchy numbers of the second kind have the explicit formula (\cite{Szalay})
\begin{align*}
	\hat{c}_{n, \alpha}^k:= (-1)^{n}\sum_{m=0}^{n}\stf{n}{m} \frac{1}{(m+\alpha)^k}.
\end{align*}
Let us define the objects \emph{augmented poly-Cauchy permutations}. Such a permutation is a poly-Cauchy permutation ending with a red element augmented with $\alpha$ boxes at the end. Into the boxes we can place some blue elements, boxes may be empty. An example with $\alpha = 3$ is given below. The boxes are indicated with $\alpha-1$ bars. 
\begin{align*}
	\blue{49}\red{612}\blue{6}\red{748}\blue{13}\red{953}||\blue{27}|\blue{58}.
\end{align*}   
\begin{theorem}
The	$(n,k)$ augmented poly-Cauchy permutations are enumerated by the shifted poly-Cauchy numbers, $\hat{c}_{n,k, \alpha}=\sum_{m=0}^{n}\stf{n}{m}(m+\alpha)^k$.
\end{theorem}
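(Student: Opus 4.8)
The plan is to reuse the direct enumeration from the first theorem of Section~2, modifying only the slot count so as to absorb the $\alpha$ appended boxes. Throughout I take $\alpha$ to be a positive integer, so that the box model is literal; the resulting equality is then an identity of polynomials in $\alpha$ and so extends to every positive real $\alpha$, where $\hat{c}_{n,k,\alpha}$ is simply read off the right-hand sum $\sum_{m=0}^n \stf{n}{m}(m+\alpha)^k$.

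I would construct an augmented $(n,k)$-poly-Cauchy permutation in two stages. First, permute the red elements $\{\red{1},\ldots,\red{n}\}$ into $m$ cycles written in canonical cycle notation; this can be done in $\stf{n}{m}$ ways, it guarantees condition (A2) exactly as before, and it produces a red word whose last symbol is a red element (the last cycle). Second, insert the $k$ blue elements $\{\blue{1},\ldots,\blue{k}\}$. The decisive point is to count the available slots correctly. In an ordinary poly-Cauchy permutation the $m$ cycles create $m+1$ gaps, but here we insist that the permutation part end with a red element, which forces the gap \emph{after} the last cycle to stay empty; hence only the $m$ gaps up to and before the last cycle are usable inside the permutation. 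To these we adjoin the $\alpha$ boxes, giving $m+\alpha$ slots altogether. Each of the $k$ blue elements independently chooses one of these $m+\alpha$ slots, and within each slot the chosen elements are listed increasingly, exactly as required by (A1); this yields $(m+\alpha)^k$ placements.

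Summing over the number of cycles then gives
\begin{align*}
\sum_{m=0}^n \stf{n}{m}(m+\alpha)^k = \hat{c}_{n,k,\alpha},
\end{align*}
which is the claimed enumeration. I expect the only delicate step to be the slot count: one must verify that forbidding a trailing blue block is precisely compensated by appending the boxes, so that the ``after the last cycle'' position is neither lost nor counted twice. A clean consistency check is $\alpha=1$, where the single box reinstates exactly the suppressed trailing gap and the formula collapses to $\hat{c}_{n,k}=\sum_m \stf{n}{m}(m+1)^k$, recovering the plain poly-Cauchy count of the first theorem; the worked example with $m=4$ cycles, $\alpha=3$ boxes, and $m+\alpha=7$ slots confirms that the bookkeeping is exhaustive and disjoint.
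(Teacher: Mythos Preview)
Your proof is correct and follows essentially the same approach as the paper's. The only cosmetic difference is in the bookkeeping of the slot count: you drop the ``after the last cycle'' gap (since the permutation part must end red) and then add $\alpha$ boxes to reach $m+\alpha$, whereas the paper keeps all $m+1$ ordinary gaps and views the appended boxes as $\alpha-1$ extra bars, again giving $(m+1)+(\alpha-1)=m+\alpha$; both readings describe the same bijection, and your $\alpha=1$ sanity check makes this identification explicit.
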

\begin{proof}
	The difference to the base case is that the possible places where blue elements can be inserted are augmented by $\alpha-1$ bars. In a poly-Cauchy permutation blue elements can be inserted between (before or after) red cycles. Hence, there are now $m+\alpha$ possibilities, when we consider a red permutation with $m$ cycles. 
\end{proof}
A closed formula of the shifted poly-Cauchy numbers, the generalization of Theorem \ref{theo:closed}, can be shown combinatorially as follow.
\begin{theorem}[\cite{Szalay}, Theorem 10]
	For $n$, $k$, and $\alpha$ non-negative integers it holds
	\begin{align*}
		\hat{c}_{n,k,\alpha} = \sum_{i=0}^k\sum_{j=0}^i j! \stf{n+1}{j+1} \binom{k}{i}\sts{i}{j}\alpha^{k-i}.
	\end{align*}
\end{theorem}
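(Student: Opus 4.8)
The plan is to mimic the combinatorial construction used in the proof of Theorem~\ref{theo:closed}, adapting it to the augmented setting. Recall that an $(n,k)$ augmented poly-Cauchy permutation is a poly-Cauchy permutation that ends with a red element, together with $\alpha$ ordered boxes appended at the end into which blue elements may be placed (boxes possibly empty). Because the permutation must end with a red element, no blue block sits after the last red cycle; hence the blue elements of the poly-Cauchy part occupy only the $m$ slots directly before the $m$ red cycles, while the remaining blue elements are distributed among the $\alpha$ boxes. This is exactly why these objects are counted by $\hat{c}_{n,k,\alpha} = \sum_m \stf{n}{m}(m+\alpha)^k$, which I may assume from the preceding theorem.

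First I would split the $k$ blue elements according to whether they belong to the poly-Cauchy part or to one of the boxes. Choosing the $i$ elements inserted before red cycles can be done in $\binom{k}{i}$ ways, and each of the remaining $k-i$ blue elements is assigned independently (in increasing order within each box, which is forced) to one of the $\alpha$ ordered boxes, giving $\alpha^{k-i}$ possibilities. This accounts for the factors $\binom{k}{i}$ and $\alpha^{k-i}$ in the formula.

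Next I would count, for fixed $i$, the number of ways to insert the $i$ chosen blue elements before the cycles of a permutation of $[n]$. Ordering $[n]$ into $\ell$ cycles in canonical notation ($\stf{n}{\ell}$ ways), I would partition the $i$ blue elements into $j$ nonempty blocks and place each block before a distinct chosen cycle: there are $\sts{i}{j}$ set partitions into $j$ blocks and $\binom{\ell}{j}j!$ ways to select and order the $j$ receiving cycles, so $\binom{\ell}{j}j!\sts{i}{j}$ in total (equivalently, $j!\sts{i}{j}$ surjections onto the chosen slots). Summing over $\ell$ and using the same identity $\sum_{\ell=j}^n \binom{\ell}{j}\stf{n}{\ell} = \stf{n+1}{j+1}$ invoked in Theorem~\ref{theo:closed}, the count for the poly-Cauchy part becomes $\sum_{j=0}^i j!\sts{i}{j}\stf{n+1}{j+1}$. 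Multiplying by $\binom{k}{i}\alpha^{k-i}$ and summing over $i$ yields the claimed identity.

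The main subtlety to watch is the bookkeeping that translates the condition ``ends with a red element'' into ``blue blocks occur only before cycles'', so that the available insertion slots number $m$ rather than $m+1$; this is precisely what removes the auxiliary element $\blue{*}$ used in the proof of Theorem~\ref{theo:closed} and replaces it with the clean split between the $i$ poly-Cauchy blue elements and the $k-i$ box elements. Once this split is fixed, the only remaining delicate point is the interplay between the unordered partition $\sts{i}{j}$ and the ordered assignment $\binom{\ell}{j}j!$ to distinguishable cycles, but this is handled verbatim as in the base case.
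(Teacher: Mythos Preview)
Your proposal is correct and follows essentially the same approach as the paper: split the $k$ blue elements into the $k-i$ box elements (placed in $\binom{k}{i}\alpha^{k-i}$ ways) and the $i$ poly-Cauchy elements, then re-run the argument of Theorem~\ref{theo:closed} without the auxiliary element $\blue{*}$ because the red-ending condition forces blue blocks to sit only \emph{before} cycles. You spell out the sum over $\ell$ and the identity $\sum_{\ell=j}^n\binom{\ell}{j}\stf{n}{\ell}=\stf{n+1}{j+1}$ more explicitly than the paper does, but the strategy is identical.
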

\begin{proof}
	Let $k-i$ be the number of blue elements in the boxes. We choose these elements in $\binom{k}{i}$ ways and put them into the boxes in $\alpha^{k-i}$ ways. To the left of the boxes there is a poly-Cauchy permutation on the red elements $[n]$, and blue elements $[k]$, such that the last element is red. The argument of Theorem \ref{theo:closed} has to be a slightly modified to take the type of the last element into account. Namely, we do not need the extra block, so we do not extend the blue element set now with $\blue{*}$. We take a partition of the $i$ blue elements into $j$ blocks in  $\sts{i}{j}$ ways and insert each block directly \emph{before} the chosen cycles.  
\end{proof}
\subsection{Incomplete poly-Cauchy numbers of the second kind}
The  incomplete poly-Cauchy numbers of the second kind have the explicit formula \cite{Kom8}
\begin{align*}
	\hat{c}_{n,\leq m}^k:= (-1)^{n}\sum_{i=0}^{n}\stf{n}{i}_{\leq m} \frac{1}{(i+1)^k},
\end{align*} 
where $\stf{n}{k}_{\leq m}$ denotes the restricted Stirling numbers of the first kind. This sequence counts the number of permutations on  $n$ elements with $k$ cycles that each cycle  contains at  most $m$ items.  From a similar argument we have that $\hat{c}_{n,k, \leq m}=\sum_{i=0}^{n}\stf{n}{i}_{\leq m}(i+1)^k$ counts the number of poly-Cauchy permutations of $[n+k]$, such that the size of each cycle in the cycle decomposition of the  permutation  whose support belongs to  $\{1,2,\ldots, n\}$ is at most $m$.

\begin{example}
	For $n=3$ and $k=1$, $m=2$ we have the following restricted poly-Cauchy permutations:
	\begin{align*}
		&\red{123}\blue{1}, \quad \red{12}\blue{1}\red{3}, \quad \red{132}\blue{1}, \quad  \red{1}\blue{1}\red{23}, \quad  \red{1}\blue{1}\red{32}, \quad  \red{213}\blue{1}, \quad \red{21}\blue{1}\red{3}\\
		&\red{231}\blue{1},  \quad \red{2}\blue{1}\red{31},\quad \blue{1}\red{123}, \quad \blue{1}\red{132}, \quad \blue{1}\red{213},\quad \blue{1}\red{231}
	\end{align*}
	So, $\hat{c}_{3,1, \leq 2}=13$.
	
\end{example}

The  associated poly-Cauchy numbers of the second kind have the explicit formula 
\begin{align*}
	\hat{c}_{n,\leq m}^k:= (-1)^{n}\sum_{i=0}^{n}\stf{n}{i}_{\geq m} \frac{1}{(i+1)^k},
\end{align*}
where $\stf{n}{k}_{\geq m}$ denotes the associated Stirling numbers of the first kind. This sequence counts the number of permutations on  $n$ elements with $k$ cycles with the restriction that each cycle  contains at  least $m$ items. So, the sequence $\hat{c}_{n,k, \geq m}:=\sum_{i=0}^{n}\stf{n}{i}_{\geq m}(i+1)^k$ counts the number of poly-Cauchy permutations of $[n+k]$, such that the size of each cycle in the cycle decomposition of the  permutation   whose support belongs to  $\{1,2,\ldots, n\}$ is at least $m$.

\begin{example}
	For $n=3$ and $k=1$, $m=2$ we have the permutations
	\begin{align*}
		&\red{312}\blue{1}, \quad \red{321}\blue{1}, \quad \blue{1}\red{312}, \quad  \blue{1}\red{321}.
	\end{align*}
	So, $\hat{c}_{3,1, \geq 2}=4$.
	
\end{example}

\subsection{$q$-poly-Cauchy numbers} The $q$-poly-Cauchy numbers of the second kind have the explicit formula \cite{Kom5}
\begin{align*}
	\hat{c}_{n,q}^{(k)}:= (-1)^{n}\sum_{m=0}^{n}\stf{n}{m} \frac{q^{n-m}}{(m+1)^k}.
\end{align*}

It is clear that the sequence $\hat{c}_{n,k}(q):=\sum_{m=0}^{n}\stf{n}{m} q^{n-m}(m+1)^k$ counts the number of poly-Cauchy permutations of $[n+k]$ such that the non left-to-right maxima of $\pi^n$ are colored by one of $q$ colors. 

\begin{example}
	For $n=2$ and $k=2$ we have the permutations
	\begin{align*}
		&\blue{12}\red{12} \to 1,\quad \blue{12}\red{21}\to q, \quad \blue{1}\red{12}\blue{2} \to 1 ,\quad  \blue{1}\red{21}\blue{2} \to q ,\quad \blue{2}\red{12}\blue{1} \to 1, \quad  \blue{2}\red{21}\blue{1} \to q, \quad \blue{1}\red{1}\blue{2}\red{2} \to 1\\
		&\blue{2}\red{1}\blue{1}\red{2} \to 1,  \quad \red{12}\blue{12} \to 1, \quad \red{21}\blue{12}\to q, \quad \red{1}\blue{12}\red{2}\to 1, \quad \red{1}\blue{1}\red{2}\blue{2}\to 1,\quad \red{1}\blue{2}\red{2}\blue{1}\to 1
	\end{align*}
	So, $\hat{c}_{2,2}(q)=4q+9.$
	
\end{example}
\subsection{$q$-poly-Cauchy permutations with a parameter}
There is another generalization, $q$-poly-Cauchy numbers with a parameter \cite{Kom4}, given by the explicit formula
\begin{align*}
	\hat{c}_{n,\rho, q}^{(k)}= (-1)^{n}\sum_{m=0}^{n}\stf{n}{m} \frac{\rho^{n-m}}{[m+1]_q^k},
\end{align*}
 whith
$[n]_q=1+q+\cdots  + q^{n-1}$.
The expression with negative $k$ and ignoring the sign, defined as $\hat{c}_{n, k,\rho, q} = \sum_{m=0}^{n}\stf{n}{m} \rho^{n-m}[m+1]_q^k$ 
counts poly-Cauchy permutations with some weight on both the red elements and blue elements. As in the case of $q$-poly-Cauchy permutations we can color the  non left-to-right maxima of $\pi^n$ by one of $\rho$ colors. This notion is extended by a weight on each blue elements, $b$. Namely, $w(b) = q^{\#\mbox{red cycles to the left of $b$}}$. The weight of the poly-Cauchy permutation is the product of weights of all the elements.
Formally, let $\mathcal{P}_{n,k}$ denote the set of poly-Cauchy permutations of $[n+k]$, and $\mu(\pi)$ the number of non left-to-right maxima in $\pi^n$.  Then we have
\begin{align*}
		\hat{c}_{n,k,\rho, q} = \sum_{\pi\in \mathcal{P}}\rho^{\mu(\pi)}\prod_{i=1}^{k}q^{w(n+i)}. 
\end{align*}

\begin{example}
	For $n=2$ and $k=2$  the weight of the permutations are
	\begin{align*}
		&\blue{12}\red{12} \to 1,\quad \blue{12}\red{21}\to \rho, \quad \blue{1}\red{12}\blue{2} \to q^2 ,\quad  \blue{1}\red{21}\blue{2} \to q\rho ,\quad \blue{2}\red{12}\blue{1} \to q^2, \quad  \blue{2}\red{21}\blue{1} \to q\rho, \quad \blue{1}\red{1}\blue{2}\red{2} \to q\\
		&\blue{2}\red{1}\blue{1}\red{2} \to q,  \quad \red{12}\blue{12} \to q^4, \quad \red{21}\blue{12}\to q^2\rho, \quad \red{1}\blue{12}\red{2}\to q^2, \quad \red{1}\blue{1}\red{2}\blue{2}\to q^3,\quad \red{1}\blue{2}\red{2}\blue{1}\to q^3.
	\end{align*}
	So, $\hat{c}_{n,k,\rho,q} = \rho (1+2q+q^2)+(1+2q+3q^2+2q^3+q^4)$.
	
\end{example}

\section{Conclusion}
In this paper we presented a combinatorial interpretation of poly-Cauchy numbers of the second kind. We showed how this model can be extended to some of the generalizations that were introduced in the literature on these numbers. However, this paper on this topic is not even aimed to be complete. One of our goal was to show the power of combinatorial thinking, proofs with pictures.

In this section we list some open questions that would be interesting to understand combinatorially.  
\begin{itemize}
\item[1.] We studied in this work only the poly-Cauchy numbers (and their generalizations) of the second kind. The question arises naturally, is it possible to give a similar interpretation of the poly-Cauchy of the first kind?
The relation between the two kinds of poly-Cauchy numbers is given for instance by the following formula:
\begin{align*}
(-1)^n\frac{c_n^k}{n!}= \sum_{m=1}^{n}\binom{n-1}{m-1} \frac{\hat{c}_m^k}{m!}.
\end{align*}
\item[2.]We did not present a combinatorial proof for all of the formulas that are known, though we think most of the identities could be attacked using our combinatorial interpretation. The interested reader can try to find combinatorial proof for them. For example, recently, Komatsu \cite{Kom6} (Theorem 8) showed an annihilation formula
\begin{align*}
	\sum_{\ell = 0}^k\sts{n+1}{n-\ell+1}_{n-k}\hat{c}_{n-\ell}^{(-k)} =0.
\end{align*}
\item[3.] There are also several interesting identities involving generalizations of the poly-Cauchy numbers that have a combinatorial ``flavour''. As an example, we recall the expression that was given by Komatsu and Szalay \cite{Szalay} in their study about the shifted poly-Cauchy numbers. 
\begin{theorem}\cite{Szalay}
	\begin{align*}
	\hat{c}_{n,\alpha}^{(-k)} = (-1)^{\alpha-1}\sum_{\mu=0}^{\alpha-1}Q_{\mu}(n,\alpha)\hat{c}_{n+\mu}^{(-k)}, 
	\end{align*}
where 
\begin{align*}
	Q_{\mu}(n,\alpha) = \sum_{i=0}^{\alpha-\mu-1} \binom{\alpha-1}{i}\sts{\alpha-i-1}{\mu}n^i, \quad (0\leq \mu\leq \alpha-1).
\end{align*}
\end{theorem}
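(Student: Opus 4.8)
The plan is to reduce the identity to a single polynomial change-of-basis statement and then recognize $Q_\mu(n,\alpha)$ as the coefficients that appear. First I would rewrite everything in the unsigned integer normalization used in the paper. Since $\hat{c}_n^{(-k)}=(-1)^n\hat{c}_{n,k}$ and $\hat{c}_{n,\alpha}^{(-k)}=(-1)^n\hat{c}_{n,k,\alpha}$, cancelling the global sign $(-1)^n$ turns the claim into the equivalent assertion
\begin{align*}
\hat{c}_{n,k,\alpha}=\sum_{\mu=0}^{\alpha-1}(-1)^{\alpha-1-\mu}Q_\mu(n,\alpha)\,\hat{c}_{n+\mu,k},
\end{align*}
where $\hat{c}_{n,k,\alpha}=\sum_{m}\stf{n}{m}(m+\alpha)^k$ and $\hat{c}_{n+\mu,k}=\sum_{j}\stf{n+\mu}{j}(j+1)^k$.

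The key observation is that this is asserted for \emph{every} $k$ with one and the same family $Q_\mu(n,\alpha)$. As functions of $k$, both sides are finite $\Z$-linear combinations of the geometric sequences $k\mapsto v^k$ over positive integers $v$; since these sequences are linearly independent, the identity for all $k$ is equivalent to equality of the weight attached to each base $v$. Collecting the weight of $v$ on each side and writing $j=v-1$ gives the equivalent Stirling relation $\stf{n}{\,j-\alpha+1\,}=\sum_{\mu}(-1)^{\alpha-1-\mu}Q_\mu(n,\alpha)\stf{n+\mu}{j}$. Multiplying by $x^{j}$, summing over $j$, and using the horizontal generating function $\sum_{j}\stf{p}{j}x^{j}=x(x+1)\cdots(x+p-1)$ from the introduction, this becomes, after dividing out the common factor $x(x+1)\cdots(x+n-1)$, the single polynomial identity
\begin{align*}
x^{\alpha-1}=\sum_{\mu=0}^{\alpha-1}(-1)^{\alpha-1-\mu}Q_\mu(n,\alpha)\,(x+n)(x+n+1)\cdots(x+n+\mu-1).
\end{align*}

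This last identity is a clean statement I would prove directly. Substituting $y=x+n$ reduces it to expanding $(y-n)^{\alpha-1}$ in the basis of rising factorials $y^{\overline{\mu}}=y(y+1)\cdots(y+\mu-1)$. I would expand $(y-n)^{\alpha-1}=\sum_{i}\binom{\alpha-1}{i}(-n)^{i}y^{\alpha-1-i}$ by the binomial theorem and then convert each ordinary power through the standard change of basis $y^{p}=\sum_{\mu}(-1)^{p-\mu}\sts{p}{\mu}y^{\overline{\mu}}$ with $p=\alpha-1-i$. Swapping the order of summation, the signs combine as $(-n)^{i}(-1)^{(\alpha-1-i)-\mu}=(-1)^{\alpha-1-\mu}n^{i}$, so the coefficient of $y^{\overline{\mu}}$ becomes exactly $(-1)^{\alpha-1-\mu}\sum_{i}\binom{\alpha-1}{i}\sts{\alpha-1-i}{\mu}n^{i}=(-1)^{\alpha-1-\mu}Q_\mu(n,\alpha)$, the upper limit $i\le\alpha-\mu-1$ being forced by $\sts{\alpha-1-i}{\mu}=0$ for $\alpha-1-i<\mu$. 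This matches the definition of $Q_\mu(n,\alpha)$ and completes the argument.

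The main obstacle, and the reason this belongs in our list of open problems, is to replace the coefficient-matching step by a genuinely combinatorial one in the spirit of the rest of the paper. The alternating sign $(-1)^{\alpha-1-\mu}$ calls for a sign-reversing involution on poly-Cauchy permutations of $[n+\mu+k]$ carrying the marked structure encoded by $Q_\mu(n,\alpha)$: the factor $\binom{\alpha-1}{i}$ selects which of the $\alpha-1$ adjoined symbols are inactive, the power $n^{i}$ labels those $i$ inactive symbols by the original red elements, and $\sts{\alpha-1-i}{\mu}$ distributes the remaining $\alpha-1-i$ symbols into the $\mu$ extra red cycles. One would then show that all configurations cancel in pairs except fixed points that are precisely the augmented poly-Cauchy permutations counted by $\hat{c}_{n,k,\alpha}$. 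I expect the construction of this involution, in particular reconciling the $n^{i}$ labelling with the cycle-merging operation underlying $\phi$ in the proof of Theorem~\ref{theo:orthogonal}, to be the delicate point.
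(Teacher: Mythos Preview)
The paper does not actually prove this theorem: it is quoted in the Conclusion, with attribution to \cite{Szalay}, precisely as an example of an identity that ``would be interesting to understand combinatorially.'' There is therefore no proof in the paper to compare yours against.

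That said, your algebraic derivation is correct. The sign conversion from $\hat{c}_{n,\alpha}^{(-k)}$ and $\hat{c}_{n+\mu}^{(-k)}$ to the unsigned $\hat{c}_{n,k,\alpha}$ and $\hat{c}_{n+\mu,k}$ is accurate; the linear-independence-of-geometrics step legitimately reduces the statement to the Stirling relation $\stf{n}{j-\alpha+1}=\sum_{\mu}(-1)^{\alpha-1-\mu}Q_\mu(n,\alpha)\stf{n+\mu}{j}$; summing against $x^{j}$ and dividing out the common rising product correctly yields
\[
x^{\alpha-1}=\sum_{\mu=0}^{\alpha-1}(-1)^{\alpha-1-\mu}Q_\mu(n,\alpha)\,(x+n)(x+n+1)\cdots(x+n+\mu-1);
\]
and your final computation, expanding $(y-n)^{\alpha-1}$ by the binomial theorem and then converting each $y^{p}$ to rising factorials via $y^{p}=\sum_{\mu}(-1)^{p-\mu}\sts{p}{\mu}y^{\overline{\mu}}$, recovers exactly $Q_\mu(n,\alpha)$. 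So you have supplied a clean self-contained proof where the paper has none.

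Your final two paragraphs are also well aligned with the paper's intent: the paper's whole point in listing this identity is that a bijective argument in the style of Theorem~\ref{theo:orthogonal} is missing, and you correctly flag the sign-reversing involution --- matching the $\binom{\alpha-1}{i}n^{i}\sts{\alpha-1-i}{\mu}$ structure against augmented poly-Cauchy permutations --- as the real open problem. The paper offers no hint on how to build that involution, so your sketch stands on its own as a reasonable starting point rather than as something to be compared against the authors' approach.
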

\item[4.] There are even more generalizations in the literature, as multi-poly-Cauchy numbers and polynomials (cf. \cite{KimKim2}). 
\item[5.] It would be of course very interesting to find other combinatorial interpretations related to these numbers. 
\end{itemize}


\begin{thebibliography}{99}

\bibitem{Benyi} B.~B\'{e}nyi, Advances in bijective combinatorics.  PhD thesis, (2014), available at http://www.math.u-szeged.hu/phd/dreposit/phdtheses/benyi-beata-d.pdf.


\bibitem{BH1} B.~B\'{e}nyi and P.~Hajnal. Combinatorics of poly-Bernoulli numbers. Studia Sci. Math. Hungarica \textbf{52} (2015), 537--558.

\bibitem{BH} B.~B\'{e}nyi and P.~Hajnal. Combinatorial properties of poly-Bernoulli relatives. Integers \textbf{17} (2017)  A31, 1--26.

\bibitem{Nagy} B.~B\'{e}nyi and G.~V.~Nagy. Bijective enumerations of $\Gamma$-free matrices. Adv. in Appl. Math. \textbf{96} (2018) 195--215.

\bibitem{BR} B.~B\'{e}nyi and  J.~L.~Ram\'irez. On $q$-poly-Bernoulli numbers arising from combinatorial interpretations. arXiv:1909.09949.

\bibitem{Brewbaker} C.~Brewbaker. A combinatorial interpretation of the poly-Bernoulli numbers and two Fermat analogues. Integers \textbf{8} (2008)  \# A02, 1--9.

\bibitem{Broder}
A.~Z. Broder. The $r$-{S}tirling numbers. Disc. Math. \textbf{49} (1984), 241--259.

 \bibitem{Cenk} M.~Cenkci and P.~T.~Young. Generalizations of poly-Bernoulli and poly-Cauchy numbers. Eur. J. Math. \textbf{1} (5) (2015), 799--828.
 
 \bibitem{Kaneko} M.~Kaneko. Poly-Bernoulli numbers. J. Th\'eor. Nombres Bordeaux \textbf{9} (1997), 221--228.
 
  \bibitem{KanKom} K.~Kamano and T.~Komatsu. Poly-Cauchy polynomials. Mosc. J. Comb. Number Theory (2013) \textbf{3} (2)  (2013), 61--87.
 
\bibitem{Komc}
T.~Komatsu.  Poly-Cauchy numbers.  Kyushu J. Math. \textbf{67} (2013), 143--153.

\bibitem{Kom5}  T.~Komatsu. Poly-Cauchy numbers with a $q$ parameter. Ramanujan J. \textbf{31} (2013), 353--371.

\bibitem{Kom4}  T.~Komatsu. $q$-poly-Bernoulli numbers and $q$-poly-Cauchy numbers with a parameter by Jackson's integrals. Indag. Math. \textbf{27} (2016), 100--111.

\bibitem{Kom6} T.~Komatsu. Recurrence relations of poly-Cauchy numbers by the $r$-Stirling transform.\newline
arXiv:2103.15291 

\bibitem{Kom8} T.~Komatsu. Incomplete poly-Cauchy numbers. Monatsh Math \textbf{180} (2016), 271--288.


\bibitem{Kom7} T.~Komatsu and  F.~Luca. Some relationships between poly-Cauchy numbers and poly-Bernoulli numbers. Ann. Math. Inform. \textbf{41} (2013), 99--105.

\bibitem{KimKim} D.~S.~Kim and T.~Kim. Poly-Cauchy numbers and polynomials of the second kind. Adv. Differ. Equ.  \textbf{36} (2014).
 

\bibitem{KimKim2} D.~S.~Kim, T.~Kim, T. Komatsu, and S.-H Rim. Multi-poly-Cauchy polynomials, J. Comput. Anal. Appl. \textbf{18} (2015), 536--550.


\bibitem{Szalay} T.~Komatsu and L.~Szalay. Shifted poly-Cauchy numbers. Lith. Math. J. \textbf{54} (2) (2014), 166--181.

\bibitem{OEIS} N.~J.~A.~Sloane. The On-Line Encyclopedia of Integer Sequences, \url{http://oeis.org/}.

\end{thebibliography}
\end{document}